\documentclass[reqno, 11pt]{amsart}

\usepackage{setspace}
\usepackage{comment}
\usepackage{biblatex}
\usepackage{amsmath} 
\usepackage{amsthm}
\usepackage{amssymb}
\usepackage{hyperref}
\usepackage{color}
\usepackage{tikz}
\usepackage{setspace}
\usepackage{graphicx}
\usepackage{subcaption}
\usepackage{caption}
\usepackage{enumitem}
\usepackage{changepage}

\calclayout

\DeclareFieldFormat{doi}{\href{https://dx.doi.org/#1}{\textsc{doi}}}
\DeclareFieldFormat{url}{\href{#1}{Link}}

\newcommand{\GPS}{GPS }

\newtheorem{theorem}{Theorem}[section]

\newtheorem{lemma}[theorem]{Lemma}

\newtheorem{cor}[theorem]{Corollary}

\theoremstyle{definition}
\newtheorem{definition}[theorem]{Definition}
\newtheorem{notation}[theorem]{Notation}
\newtheorem{remark}[theorem]{Remark}
\newtheorem{example}[theorem]{Example}

\newcommand{\al}{\alpha}
\newcommand{\be}{\beta}

\newcommand{\eps}{\varepsilon}

\newcommand{\mA}{\mathcal{A}}

\newcommand{\mC}{\mathcal{C}}

\newcommand{\bm}{\textbf{m}}

\newcommand{\R}{\mathbb{R}}

\newcommand{\td}{;}
\newcommand{\bS}{\textbf{S}}

\newcommand{\Ds}{D_{\textbf{S}}}
\newcommand{\Ms}{\mathcal{M}_{\bS}}
\newcommand{\Ls}{\mathcal{L}_{\bS}}

\newcommand{\sse}{\subseteq}
\newcommand{\sli}{\sum\limits}

\newcommand{\parkS}{\text{Park}_{\bS}}
\newcommand{\As}{\mathcal{A}_{\bS}}
\newcommand{\trips}{\text{Triple}_{\bS}}
\newcommand{\Ns}{\mathcal{N}_{\bS}}
\newcommand{\me}{(\bm, \eps)}
\newcommand{\ra}{\rightarrow}

\newcommand{\ZZ}{\mathbb{Z}}

\title[]{Bijectivity of a generalized Pak-Stanley labeling}
\author{Olivier Bernardi}
\author{Neha Goregaokar}
\date{\today}

\address{Department of Mathematics, Brandeis University, Waltham, MA 02453, USA}
\email{bernardi@brandeis.edu}
\email{ngoregaokar@brandeis.edu}

\begin{document}

\begin{abstract}
The Pak-Stanley labeling is a bijection between the regions of the $m$-Shi arrangement and the $m$-parking functions. Mazin generalized this labeling to every deformation of the braid arrangement and proved that this labeling is always surjective onto a set of directed multigraph parking functions. We provide a right inverse to the generalized Pak-Stanley labeling, and identify a class $\mC$ of arrangements for which this labeling is bijective. The class $\mC$ includes the multi-Shi arrangements and the multi-Catalan arrangements. We also show that the arrangements in $\mC$ are the only transitive arrangements for which the generalized Pak-Stanley labeling is bijective.
\end{abstract}

\maketitle

\section{Introduction}

A (real) hyperplane arrangement is a finite collection of affine hyperplanes in $\R^n$ for some integer $n \geq 1$. The hyperplanes decompose the space into convex \textit{regions}. It is a central question in the study of hyperplane arrangements to count the number of regions of a given hyperplane arrangement, and to provide a bijective encoding of these regions by some combinatorial objects. 

For the Shi arrangement, Pak suggested a labeling of the regions using parking functions~\cite{PS1}. Stanley extended this labeling to $m$-Shi arrangements~\cite{Stanley1998}, and proved that it is a bijection between the regions of the $n$ dimensional $m$-Shi arrangement and the $m$-parking functions of length $n$. 

Subsequently, there were several attempts to generalize the Pak-Stanley bijection to other arrangements. Duval, Klivans, and Martin~\cite{DKM_Conjecture} conjectured that for every graph $G$ the Pak-Stanley labeling was a surjection between the regions of the $G$-Shi arrangement and the $G$-parking functions. This was proved by Hopkins and Perkinson who treated the case of general bigraphical arrangements~\cite{HP_Bigraphical,HP_Semiorder}. 

In~\cite{MR3721647}, Mazin generalized the Pak-Stanley labeling to every \emph{deformation of the braid arrangement}. These are the arrangements in which every hyperplane has the form   
$$H_{i,j,s} := \{(x_1, \ldots ,x_n) \in \R^n \mid x_i - x_j = s\},$$ 
for some indices $i,j\in[n]$ and real number $s$. To any such arrangement $\mA$ one can associate a directed multigraph $D_{\mA}$, and Mazin showed that the generalized Pak-Stanley labeling is always surjective onto the set of $D_{\mA}$-parking functions~\cite{MR3721647} (see Section \ref{sec:Mazin} for the definition). 

Importantly, the generalized Pak-Stanley labeling is not bijective in general, and it is an open question to characterize the arrangements for which it is bijective. In~\cite{Baker}, Baker proved a necessary (but not sufficient) condition for injectivity (see also the slight improvement in~\cite[Theorem 1.30]{Miller}). Mazin and Miller then showed that for central graphical arrangements, this condition is sufficient in~\cite{MMGraphical}.

In this paper, we define a right inverse to the generalized Pak-Stanley labeling. This gives a new proof of Mazin's surjectivity result~\cite{MR3721647}.
We then show that the generalized Pak-Stanley labeling is bijective for a family of arrangements that generalize the $m$-Shi and $m$-Catalan arrangements, and that we call $\me$-arrangements. Let us introduce some notation to describe this family.

\begin{notation}
Let $\bS = (S_{i,j})_{1\leq i < j \leq n}$ be a collection of finite sets of integers. We define the \emph{$\bS$-braid arrangement} $\mA_{\bS}$ as the collection of the following hyperplanes:
    $$\mA_{\bS} = \{H_{i,j,s} \mid 1 \leq i < j \leq n, \, s \in S_{i,j}\}.$$
For $i < j $, we denote $S_{i,j}^+ = \{s > 0 \mid H_{i,j,s} \in \mA_{\bS}\}$ and $S_{j,i}^+ = \{s \geq 0 \mid H_{j,i,s} \in \mA_{\bS}\}$. 
\end{notation}
The \emph{$m$-Shi arrangement} is the arrangement $\mA_\bS$ for $S_{i,j}=[-m+1;m]$ for all $i<j$. The \emph{$m$-Catalan arrangement} is  the arrangement $\mA_\bS$ for $S_{i,j}=[-m;m]$ for all $i<j$.

\begin{remark}\label{rk:S-is-general}
Any deformation of the braid arrangement is combinatorially equivalent to an arrangement where the hyperplanes are of the form $H_{i,j,s}$ with $s\in \ZZ$, and any such arrangement is of the form $\mA_\bS$ for some tuple $\bS$ of sets of integers. 
\end{remark}

\begin{definition}\label{mepsarr}
A \emph{$\me$-arrangement} is an $\bS$-braid arrangement for a tuple $\bS = (S_{i,j})_{1\leq i < j \leq n}$ such that 
 $$S_{i,j}^+ = \begin{cases}
         [1; m_j - \eps_{i,j}] & \text{ if } i<j \\
         [0; m_j - \eps_{i,j}] & \text{ if } i>j 
     \end{cases},$$
where $\bm = (m_1, \ldots, m_n)$ is a tuple of non-negative integers, and  $\eps=(\eps_{i,j})_{i\neq j\in[n]}$ is a tuple of numbers in $\{0,1\}$ such that for all $i,j,k\in [n]$ with $i<j$ one has $\eps_{i,k}\leq \eps_{j,k}$.  \end{definition}

\begin{figure}[h]
    \centering
    \includegraphics[width=0.9\linewidth]{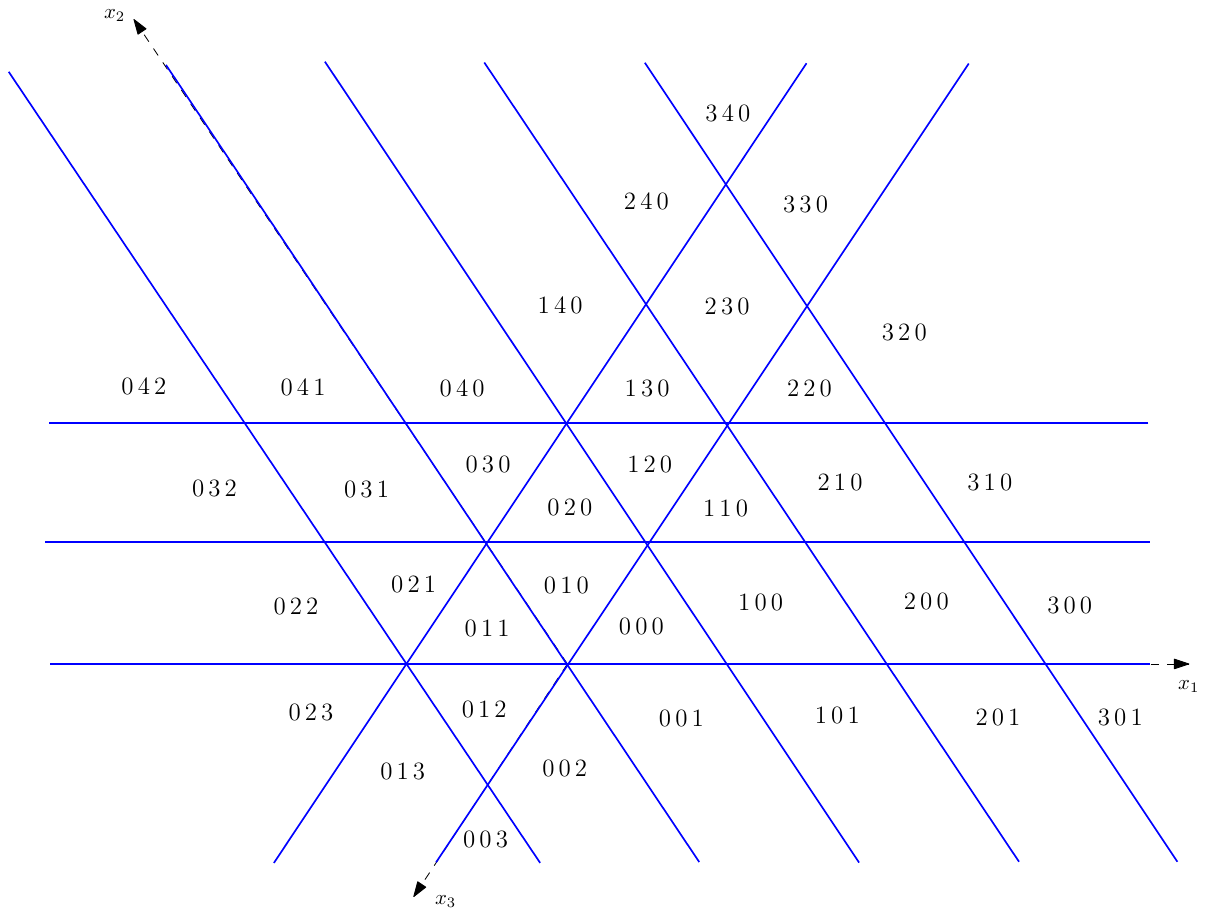}
    \caption{An $\me$-arrangement with its generalized Pak-Stanley labeling. Here we have $\bm = (1,0,3)$, $\eps_{2,3} = 1$, and $\eps_{i,j} = 0$ for all $(i,j)\neq (2,3)$.}
    \label{fig:mearr}
\end{figure}
An example of an $\me$-arrangement with its generalized Pak-Stanley labeling is shown in Figure~\ref{fig:mearr}. Note that if $m_j = m$ for all $j$ and $\eps_{i,j} = 0$ for all $i,j$, then the $\me$-arrangement is the $m$-Catalan arrangement. 
If $m_j = m$ for all $j$, and $\eps_{i,j} = 0$ if and only if $i<j$, then the $\me$-arrangement is the $m$-Shi arrangement. Let us also mention that the $\me$-arrangements appear in \cite[Section 3]{BeDo:reflection-arrangments}, where a bijective method is devised to count their regions.

In this paper, we establish the bijectivity of the generalized Pak-Stanley labeling for every $\me$-arrangement. In particular we obtain a new proof of the bijectivity of the Pak-Stanley labeling for $m$-Shi arrangements. 

From our bijectivity result, one can express the number of regions in any $\me$-arrangement as a number of multigraph parking functions, which admits a determinental formula via the matrix-tree theorem (see~\cite{MR2052943}).
The $\me$-arrangements satisfy a condition called \emph{transitivity}~\cite{Bernardi} (see Section \ref{sec:ber}) and we show that they are the only transitive arrangements for which the generalized Pak-Stanley labeling is bijective.

This paper is structured as follows. In Section~\ref{bgsec}, we set our notation and recall some known results. In Section~\ref{RightInverseSec}, we define a map that is a right inverse for the generalized Pak-Stanley labeling, thus recovering Mazin's surjectivity result. In Section~\ref{suffconds} we establish the bijectivity of the labeling for $\me$-arrangements. In Section~\ref{TransitiveArrSec}, we show that $\me$-arrangements are the only transitive arrangements such that the labeling is bijective, and recover Mazin and Miller's result for transitive central graphical arrangements.

\section{Background and notation}\label{bgsec}

In this section we state some definitions and recall some known results. For the rest of this paper, for a positive integer $n$, we denote $[n] := \{1, \ldots, n\}$ and for integers $a$ and $b$, we denote $[a;b]:= \{a, \ldots, b\}$. 
Further, $\bS = (S_{i,j})_{1\leq i < j \leq n}$ denotes a collection of finite sets of integers, and $R(\As)$ denotes the set of regions of the arrangement $\As$. 

\subsection{Parking functions and the generalized Pak-Stanley labeling}\label{sec:Mazin}

\begin{definition}
    Let $\bS = (S_{i,j})_{1 \leq i< j \leq n}$ be a collection of finite sets of integers. We define $\Ds$ to be the directed multigraph on $[n+1]$ with the arc set containing precisely $|S_{i,j}^+|$ arcs $(i,j)$ and one arc $(i, n+1)$ for all $i,j \in [n]$.  
\end{definition}

\begin{figure}[h]
    \centering
    \includegraphics[width=0.3\linewidth]{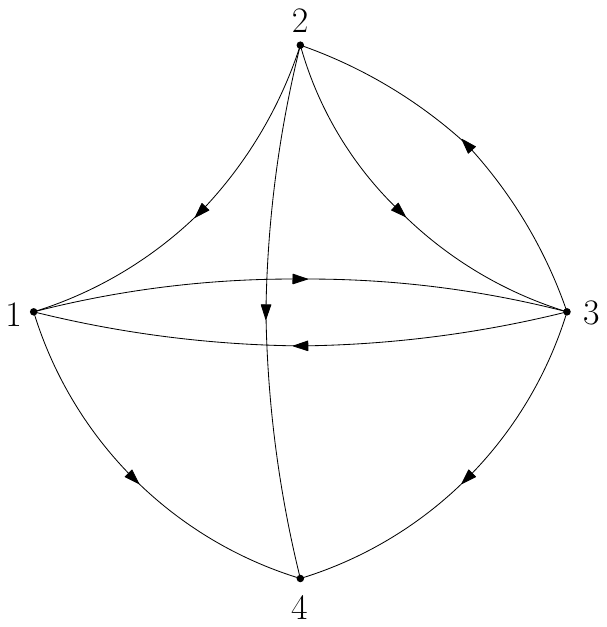}
    \caption{The directed multigraph $\Ds$ for $S_{1,2} = \{0\}$, $S_{1,3} = S_{2,3} = \{0,1\}$.}
    \label{fig:multigraph}
\end{figure}

\begin{definition}
    Let $D = ([n+1], A)$ be a directed multigraph. A sequence $p = (p_1, \ldots, p_n)$ is a \emph{$D$-parking function of length $n$} if and only if for every non-empty set $U \sse [n]$, $$\exists u \in U \text{ such that } p_u < |\{(u, v) \in A \mid v \notin U\}|.$$ 
\end{definition}

We denote by $\parkS$ the set of $\Ds$-parking functions. Then, the \emph{generalized Pak-Stanley labeling}, or \emph{\GPS labeling} for short, is the map $\lambda: R(\As) \ra \parkS$ defined as follows.

We denote by $R_0$ the region of $\mA_{\bS}$ containing the fundamental alcove $\{(x_1, \ldots, x_n) \in \R^n \mid x_1 > x_2 > \ldots > x_n > x_1 - 1\}$. Then, for a region $R$ of $\mA_{\bS}$, we define $\lambda(R) = (p_1, \ldots, p_n)$ with 
\begin{align*}
    p_i &= \# H_{i,j,s} \in \mA_{\bS} \text{ separating $R$ from $R_0$ with } s> 0 \text{ or } (s= 0 \text{ and } i>j) \\
    &= |\{j \in [n], s \in S_{i,j}^+ \mid H_{i,j,s} \text{ separates $R$ from $R_0$}\}|. 
\end{align*}

\begin{figure}[ht]
    \centering
    \includegraphics[width=\linewidth]{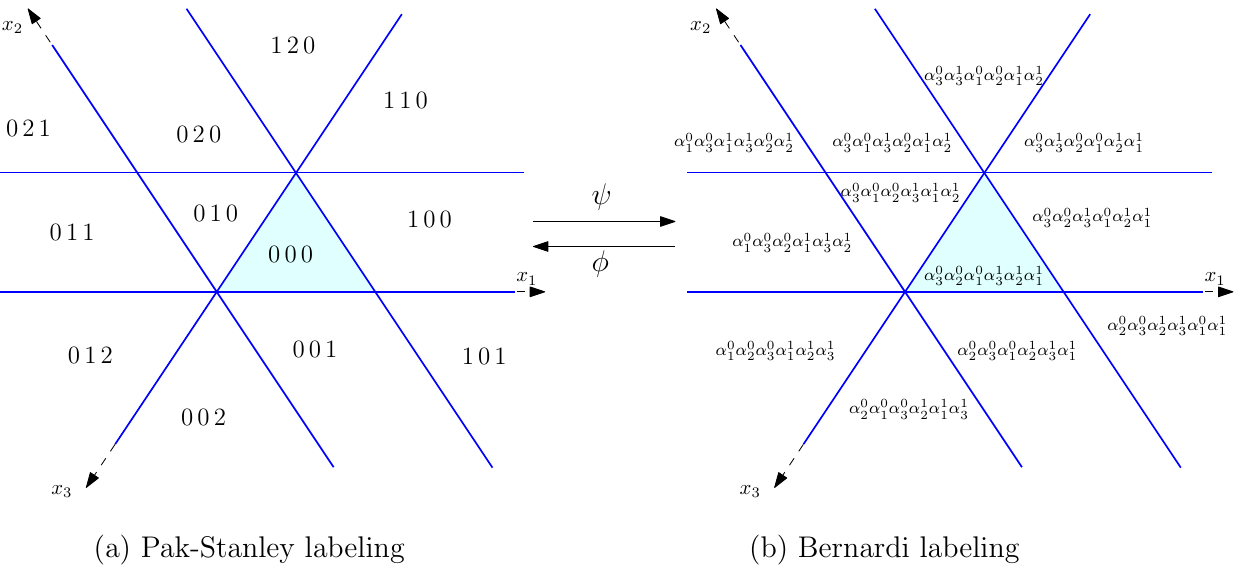}
    \caption{The generalized Pak-Stanley labeling, and Bernardi labeling for $\As$ with $S_{1,2} = \{0\}$, $S_{1,3} = S_{2,3} = \{0,1\}$. The region $R_0$ is shaded. The associated directed multigraph $\Ds$ is shown in Fig.~\ref{fig:multigraph}.}
    \label{fig:labelings}
\end{figure}

Mazin proved the following result in~\cite{MR3721647}.
\begin{theorem}[{~\cite[Theorem 4.4]{MR3721647}}]
For any tuple $\bS=(S_{i,j})$, the \GPS labeling $\lambda : R(\As) \ra \parkS$ is surjective. 
\end{theorem}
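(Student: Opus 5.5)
We prove surjectivity, as announced in the introduction, by building an explicit right inverse $\mu:\parkS\ra R(\As)$ with $\lambda\circ\mu=\mathrm{id}$. The natural way to produce a region with a prescribed label is to produce a point $x\in\R^n$ off all hyperplanes, and the label of its region is read off from the differences $x_i-x_j$. For $i\neq j$ and $s\in S^+_{i,j}$, a point $x$ lies on the opposite side of $H_{i,j,s}$ from $R_0$ exactly when $x_i-x_j>s$, because in $R_0$ all differences lie in $(-1,1)$. So $p_i$ counts the pairs $(j,s)$ with $s\in S_{i,j}^+$ and $x_i-x_j>s$. Hence $p_i$ is a nondecreasing step function of $x_i$ when the other coordinates are held fixed.

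The construction is a greedy, parking-style process. Given $p\in\parkS$, we place the coordinates one at a time, each far below the coordinates already placed.

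1. Start with $U=[n]$ as the set of unplaced indices.
2. At each step, apply the parking condition to $U$. It gives some $u\in U$ with $p_u<|\{(u,v)\in A\mid v\notin U\}|$, the number of arcs from $u$ leaving $U$. These are the arcs to already placed vertices plus the one arc to $n+1$.
3. Choose such a $u$; to make $\mu$ well defined, take the largest such index.
4. Place $x_u$ so that exactly $p_u$ of the pairs $(v,s)$, with $v$ already placed and $s\in S^+_{u,v}$, satisfy $x_u-x_v>s$.
5. Place $x_u$ far below every coordinate not yet placed. Then $x_u-x_w$ is very negative for all $w$ placed later, so those pairs contribute nothing to $p_u$, and $u$ contributes nothing to the later coordinates' counts against $u$ either.

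The key step, and the main obstacle, is step 4. The map $t\mapsto\#\{(v,s): v \text{ placed},\ s\in S^+_{u,v},\ t-x_v>s\}$ is a nondecreasing step function of $t$. It ranges from $0$, for $t$ very small, to $N=\sum_{v\text{ placed}}|S^+_{u,v}|$, for $t$ large. It jumps by at most one at each threshold $x_v+s$ once the thresholds are distinct. So every value in $[0;N]$ is attained on an open interval, and the inequality $p_u<N+1$ from the parking condition is exactly what guarantees $p_u$ is attained.

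Two compatibility problems remain:

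\begin{itemize}
\item We need "far below the unplaced coordinates" to be consistent with this interval. The fix is to place coordinates in decreasing order of value: build $x$ by successive downward placements, so that later-placed indices get smaller values.
\item Each new coordinate must be positioned relative to all previous ones. This requires reversing the order: the vertices placed \emph{first} should be the ones with large coordinates. So $u$ is placed relative to earlier (larger) coordinates, and later vertices go below it.
\end{itemize}

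Carrying this out, the greedy order is by removing $u$ from $U$ at each step, with $x_u$ chosen as above relative to the previously placed coordinates. We also choose generic real values so that $x$ avoids all hyperplanes, including those with $s\le 0$ that do not affect the label. The one thing to check carefully is that pairs $(w,s)$ with $w$ placed later really contribute nothing to $p_u$. We have $x_u-x_w>0$, but $s\in S^+_{u,w}$ may be $0$ when $u>w$. This is handled by placing each new coordinate below all previous ones by more than $\max|s|$, and checking that the unavoidable comparisons are consistent with the count being exactly $p_u$. That check is where the details of $S^+$ ($s>0$ for $i<j$, $s\ge0$ for $i>j$) enter and must be matched with the arc count in $\Ds$.

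Finally we set $\mu(p)$ to be the region containing $x$. The computation above gives $\lambda(\mu(p))=p$, so $\lambda$ is surjective.
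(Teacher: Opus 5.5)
There is a genuine gap. The placement scheme contradicts itself, and the greedy underneath it (pick $u$ from the parking inequality on $U$ with the largest index, then position $x_u$ relative to the earlier coordinates) fails even after the orientation is repaired.

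\begin{itemize}
\item Step~5 puts every later coordinate far above every earlier one. Then in step~4 the new $x_u$ crosses all $N$ thresholds and cannot be tuned to $p_u$.
\item Your fix (each new coordinate more than $\max|s|$ below all previous ones) fails symmetrically. Now $x_u$ crosses none of its thresholds $x_v+s$, while each earlier $v$ crosses all $|S^+_{v,u}|$ thresholds $x_u+s$. So later vertices contribute everything to earlier labels, not nothing.
\item What you actually need is to place $x_u$ across exactly $p_u$ thresholds while keeping $x_v-x_u<\min S^+_{v,u}$ for every earlier $v$. The parking inequality on $U$ only bounds $p_u$ from above, whereas these constraints can force $x_u$ past more than $p_u$ thresholds.
\end{itemize}

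Here is a concrete failure. Take $n=3$, $S_{1,2}=\{0\}$, $S_{1,3}=\{1\}$, $S_{2,3}=\{1,2\}$. Then $S^+_{2,1}=\{0\}$, $S^+_{1,3}=\{1\}$, $S^+_{2,3}=\{1,2\}$, and all other $S^+_{i,j}$ are empty. Consider the $\Ds$-parking function $p=(0,2,0)$.
\begin{itemize}
\item Your rule places $3$ first.
\item It then places $2$, the largest index with $p_2=2<3$, which forces $x_2>x_3+2$.
\item Next, $p_1=0$ forces $x_1<x_3+1<x_2$.
\item Then $H_{2,1,0}$ adds $1$ to the second coordinate, so you get $(0,3,0)$ whatever you do.
\end{itemize}
The unique region with label $(0,2,0)$ is $\{x_1-x_3<1<x_2-x_3<2\}$. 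There vertex $1$, which your step~2 assumed would contribute nothing to $p_2$, actually contributes one.

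The missing idea is that the order of placement must be determined dynamically, not read off from the parking condition on $U$ with a fixed tie-break. The paper proceeds as follows.
\begin{itemize}
\item It sweeps upward through the values $x_j+s$, building a sketch letter by letter, and keeps the residual counts $P_r$.
\item It places vertex $k$ (writes $\al_k^0$) exactly when its residual count reaches $0$, taking the rightmost zero in case of ties. This tie-break matters because $0\in S^+_{i,k}$ is only possible for $i>k$.
\item Vertices are therefore placed in increasing order of their coordinates, each exactly at the height where it has crossed $p_k$ thresholds. Hence later vertices never affect earlier labels.
\end{itemize}
In the example above this places $1$ before $2$ and produces $\al_3^0\al_1^0\al_3^1\al_2^0\cdots$. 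The parking condition is used only to show that the sweep never stalls: at termination, the set of indices with positive residual entries would violate it (Lemma~\ref{PsiWD}). Exactness of the labels is then Lemma~\ref{keylemma} and Theorem~\ref{IdParking}.
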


\subsection{Sketches and Bernardi labeling}\label{sec:ber}\hfill\\ 
In \cite{Bernardi}, Bernardi defined a bijection between regions of the $m$-Catalan arrangement and $(m,n)$-sketches. 

\begin{definition}
    An \emph{$(m,n)$-sketch} is a word $w = w_1w_2 \ldots w_{(m+1)n}$ such that 
    \begin{itemize}[nolistsep]
        \item $\{w_1, \ldots, w_{(m+1)n}\} = \{\al_i^{s} \mid i \in [n], \, s \in [0\td m]\}$,
        \item for all $i \in [n]$ and $s \in [m]$, the letter $\al_i^{s-1}$ appears before $\al_i^{s}$,
        \item for all $i, j \in [n]$ and $s, t \in [m]$, if $\al_i^{s-1}$ appears before $\al_j^{t-1}$ then $\al_i^{s}$ appears before $\al_j^{t}$.
    \end{itemize}
\end{definition}

An $(m,n)$-sketch $w$ indicates the order on the set $\{x_i + s\mid i \in [n],~ s\in [0;m]\}$ for a point $(x_1, \ldots, x_n)$ in a region of the $m$-Catalan arrangement, with $\al_i^s$ representing the number $x_i + s$.

We define an ordering on the letters of an $(m,n)$-sketch as follows:
\begin{definition}
    Let $w$ be an $(m,n)$-sketch. We define an ordering $\leq_w$ on the letters of $w$, that is, on the set $\{\al_i^{s} \mid i \in [n], s\in [0\td m]\}$ by $\al_i^{s} <_w \al_j^{t}$ if and only if $\al_i^{s}$ is before $\al_j^{t}$ in $w$. 
    
    We define $\beta(w)$ to be the region of the $m$-Catalan arrangement made of the points $x \in \R^n$ such that  $x_i - x_j < s$ if and only if $\al_i^0 <_w \al_j^s$.
\end{definition}

\begin{lemma}[{~\cite[Proposition 8.1]{Bernardi}}]\label{sketchregionbij} The map $\beta$ is a bijection between $(m,n)$-sketches and regions of the $m$-Catalan arrangement.  
\end{lemma}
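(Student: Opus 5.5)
We must show that $\beta$ is well defined (its image is a nonempty region of the $m$-Catalan arrangement), injective, and surjective. The plan is to pass through the total order on the multiset of numbers $\{x_i+s \mid i\in[n],\, s\in[0;m]\}$.

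\textbf{Well-definedness and injectivity.} Fix an $(m,n)$-sketch $w$. We exhibit a point $x$ such that the order of the numbers $x_i+s$ is exactly the order of the letters $\al_i^s$ in $w$.

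Write $\al_i^0 <_w \al_j^s$ as a sign condition on $x_i-x_j-s$. Each such condition for $s\in[-m;m]$ determines which side of $H_{i,j,s}$ the point lies on. For negative $s$, use the condition $x_j-x_i<-s$, i.e. the comparison of $\al_j^0$ with $\al_i^{-s}$. The conditions for $s$ and $-s$ are then complementary, and so $\beta(w)$ is one cell cut out by a choice of side for every hyperplane of the arrangement.

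Nonemptiness is the substantive point. We construct $x$ explicitly:
\begin{itemize}
\item process the letters of $w$ from left to right, assigning increasing real values;
\item when the letter $\al_i^0$ appears, choose $x_i$;
\item every later letter $\al_i^s$ is then forced to be $x_i+s$.
\end{itemize}
The sketch axioms (the second and third conditions) are exactly what guarantee that these forced values are increasing in the order of $w$.

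Concretely, I would place the $\al^0$-letters at carefully chosen real positions within unit windows. Let $k$ be the number of $\al^{s-1}$-type letters preceding a given letter. The third condition says the relative order of the letters at level $s$ is the order of the letters at level $s-1$, shifted so as to interleave consistently. I would prove by induction on the level $s$ that choosing $x_i$ as $\epsilon$ times the rank of $\al_i^0$, adjusted by a suitable fractional layout, realizes $w$.

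Distinct sketches give distinct orders on some pair $x_i+s$ versus $x_j+t$. Such a pair translates to the side of $H_{i,j,t-s}$ (with $|t-s|\le m$), so the regions are distinct. This gives injectivity.

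\textbf{Surjectivity.} Take $x$ in a region $R$. Since $x$ lies on no hyperplane, the values $x_i-x_j\notin[-m;m]\cap\ZZ$, so all the numbers $x_i+s$ are distinct. Sort them to get a word $w$. Its letters $\al_i^{s-1}$ and $\al_i^s$ are in order because $x_i+s-1<x_i+s$. The third condition holds because adding $1$ preserves the order: $x_i+s-1<x_j+t-1$ implies $x_i+s<x_j+t$. So $w$ is a sketch, and by construction $x\in\beta(w)$, hence $\beta(w)=R$.

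\textbf{Main obstacle.} The main obstacle is realizability: showing that every abstract sketch comes from an actual point. I would handle it by building the values $x_i$ greedily while reading $w$. At each new $\al_i^0$, pick $x_i$ strictly between the current maximum value and the next forced value $x_j+s$ that $w$ requires to come after it. This is possible because the shifted copies preserve order. The sketch conditions ensure the open intervals involved are nonempty, which I would verify by induction on the position in $w$.
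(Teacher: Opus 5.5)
The paper does not prove this lemma. It is quoted from Bernardi's Proposition 8.1, so there is no internal argument to compare with. Your route is a valid self-contained proof: surjectivity by sorting the numbers $x_i+s$, and well-definedness plus injectivity by realizing every sketch by a point whose induced order on $\{x_i+s\}$ is exactly $w$. It also justifies the paper's informal remark that a sketch ``indicates the order'' on these numbers. The surjectivity and injectivity parts are fine as written. The realizability part is the only substantive step, and it needs tightening.

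\textbf{Drop the ``concretely'' paragraph.} Taking $x_i$ to be $\epsilon$ times the rank of $\alpha_i^0$ puts all $x_i$ in a window of length less than $1$. That realizes only the layered sketch in which every level-$s$ letter precedes every level-$(s+1)$ letter. ``A suitable fractional layout'' is not an argument.

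\textbf{Make the greedy construction explicit.} Your last paragraph has the right construction, but isolate the fact doing the work. For $s,t\ge 1$ and $(i,s)\neq(j,t)$, one has $\alpha_i^s<_w\alpha_j^t$ if and only if $\alpha_i^{s-1}<_w\alpha_j^{t-1}$. One direction is the third axiom; the other is the third axiom applied to the reversed pair, using that $<_w$ is total. Then maintain this invariant: once $x_j$ is fixed for every $j$ with $\alpha_j^0<_w\alpha_i^0$, the values $x_j+t$, $t\in[0;m]$, are ordered as their letters in $w$.

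\begin{itemize}
\item The lower bound for $x_i$ is the value of the letter immediately preceding $\alpha_i^0$; by the second axiom, every earlier letter belongs to an already placed index.
\item It is not ``the current maximum value'' if that means the largest value determined so far. Forced values of letters after $\alpha_i^0$ are already determined and may exceed it.
\item The upper bound is the value of the first later letter of a placed index. The interval is nonempty by the invariant.
\end{itemize}

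To re-establish the invariant, compare $\alpha_i^s$ with $\alpha_j^t$ for a placed index $j$.
\begin{itemize}
\item If $t\ge s$: iterating the shift property, the $w$-order is that of $\alpha_i^0,\alpha_j^{t-s}$. The numerical order is that of $x_i,x_j+t-s$. These agree by the choice of $x_i$.
\item If $t<s$: the shift property gives $\alpha_j^t<_w\alpha_i^s$, since $\alpha_j^0<_w\alpha_i^0\le_w\alpha_i^{s-t}$. Numerically, $x_j+t<x_i+s$ since $x_j<x_i$.
\end{itemize}

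After the last index is placed, the invariant yields a point realizing $w$. That point avoids all hyperplanes and lies in $\beta(w)$.
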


The $\bS$-braid arrangement is a subarrangement of the $m$-Catalan arrangement for $m = \max\bigcup_{i,j\in[n]} S_{i,j}^+$. Hence, any region of the $\bS$-braid arrangement is a union of regions of the $m$-Catalan arrangement. We further define $\beta_{\bS}(w)$ to be the region of $\As$ containing the $m$-Catalan region $\beta(w)$. 

\begin{definition}
We define $\trips = \{(i,j,s) \mid i,j \in [n], (s \in S_{i,j}^+) \text{ or } (s = 0 \text{ and } i < j)\}$. 

The $\bS$-braid arrangement $\As$ is called \emph{transitive} if for all $i,j\in[n]$ and $s,t\in\mathbb{N}$,
    $$(i,j,s)\notin \trips \text{ and } (j,k,t) \notin \trips \implies (i,k,s+t)\notin \trips.$$
\end{definition}

In~\cite{Bernardi}, Bernardi further defined the notion of \emph{local maximality} on the $(m,n)$-sketches. 

\begin{definition}
    Let $\bS = (S_{i,j})_{1 \leq i < j \leq n}$ be a collection of finite sets of integers and let $m = \max\bigcup_{i,j\in[n]} S_{i,j}^+$. A $(m,n)$-sketch is called \emph{$\bS$-locally maximal} if for all $j \in [n]$, if $s$ is maximum such that $\al_j^{s}$ is immediately followed by $\al_i^{0}$ for some $i\in [n]$, then $(i,j,s) \in \trips$. We denote the set of $\bS$-locally maximal sketches by $\Ls$.
\end{definition}

The following result was proved in~\cite[Sec. 8]{Bernardi}.
\begin{theorem}[{~\cite{Bernardi}}]\label{AtLeastOneL}
For any tuple $\bS=(S_{i,j})_{i<j}$,    the map $\beta_{\bS}: \Ls \ra R(\As)$ is surjective. Moreover, if $\As$ is transitive, then $\beta_{\bS}$ is bijective. 
\end{theorem}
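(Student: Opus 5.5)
The plan is to work entirely at the level of $m$-Catalan regions and to use Lemma~\ref{sketchregionbij} to identify these with $(m,n)$-sketches, where $m=\max\bigcup S_{i,j}^+$. Since $\As$ is a subarrangement of the $m$-Catalan arrangement, each region $R\in R(\As)$ is the disjoint union of the Catalan regions $\beta(w)$ with $\beta_{\bS}(w)=R$. Two sketches give the same $\bS$-region exactly when every Catalan hyperplane separating them is missing from $\As$.

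\textbf{Step 1 (local moves).} I would first describe adjacency of Catalan regions in terms of sketches. Suppose $w$ contains a factor $\al_j^{s}\al_i^{0}$, and $s$ is maximal with this property for the given $j$. Then the wall of $\beta(w)$ of the form $x_i-x_j=s$ is a facet. Swapping this pair, together with the forced swaps of the shifted pairs $\al_j^{s+k}\al_i^{k}$ that are needed to keep the sketch axioms, produces a sketch $w'$ with $\beta(w')$ adjacent to $\beta(w)$ across $H_{i,j,s}$. With the sign conventions built into $\trips$, this hyperplane lies in $\As$ precisely when $(i,j,s)\in\trips$. Hence if $(i,j,s)\notin\trips$, the move does not change $\beta_{\bS}$.

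\textbf{Step 2 (surjectivity).} I would define a statistic $\Phi(w)$, for instance the number of pairs (letter $\al_i^0$, letter $\al_j^t$) with $\al_j^t<_w\al_i^0$, and check that each move of Step~1 strictly increases $\Phi$. Fix a region $R$ and choose $w$ maximizing $\Phi$ among the finitely many sketches with $\beta_{\bS}(w)=R$. If $w$ were not $\bS$-locally maximal, some $j$ would have a maximal factor $\al_j^s\al_i^0$ with $(i,j,s)\notin\trips$. The corresponding move would stay in $R$ and increase $\Phi$, which is a contradiction. Therefore $w\in\Ls$, and $\beta_{\bS}$ is surjective.

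\textbf{Step 3 (injectivity under transitivity).} This is the main obstacle. I would show that, when $\As$ is transitive, every locally maximal sketch in $R$ equals one canonical sketch $w_R$. The natural candidate for $w_R$ is obtained by placing each $\al_i^0$ as late as the constraints of $R$ allow. Concretely, I would consider the relation ``$\al_j^t$ must precede $\al_i^0$ in every sketch of $R$'', that is, $x_i-x_j>t$ is forced on $R$. Transitivity says that the non-hyperplane relations compose, since $(i,j,s),(j,k,t)\notin\trips$ implies $(i,k,s+t)\notin\trips$. This should make the set of relations that are free in $R$ closed under the composition used in sketch orders, so that the forced relations generate a partial order whose ``latest'' linear extension is a well-defined sketch.

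Then I would argue by contradiction. Let $w\in\Ls$ with $\beta_{\bS}(w)=R$ and $w\neq w_R$, and take the first letter $\al_i^0$ that is placed earlier in $w$ than in $w_R$. Using transitivity to chain the intermediate letters, I would find an immediate factor $\al_j^s\al_i^0$ in $w$, with $s$ maximal for $j$, such that $(i,j,s)\notin\trips$. This contradicts local maximality. The delicate point is this chaining argument: it is where transitivity is genuinely needed, and it is where non-transitive arrangements produce several locally maximal sketches in the same region.
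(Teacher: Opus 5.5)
The paper gives no proof of this statement; it is quoted from Bernardi, Section 8. Your argument therefore has to stand on its own, and as written it has two problems: the statistic in Step 2 is wrong (fixable), and Step 3 is not a proof.

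\textbf{Step 2.} The statistic you chose does not work. The move of Step 1 turns $\al_j^s\al_i^0$ into $\al_i^0\al_j^s$, so $\al_i^0$ moves one step \emph{earlier}. Your count of pairs $\al_j^t<_w\al_i^0$ therefore drops by one when $s\geq 1$. When $s=0$ it is unchanged, because $\al_j^0$ moves one step later. Concretely, take $n=2$ and $S_{1,2}=\{1\}$. The region $\{x_1-x_2<1\}$ consists of the Catalan regions of $\al_2^0\al_1^0\al_2^1\al_1^1$, $\al_1^0\al_2^0\al_1^1\al_2^1$ and $\al_1^0\al_1^1\al_2^0\al_2^1$. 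Your statistic is maximized by the last sketch, which is not $\bS$-locally maximal since $(2,1,1)\notin\trips$.

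What works is to minimize the number of Catalan hyperplanes separating $\beta(w)$ from the fundamental alcove, namely $\#\{(i,j,t): t\geq 1,\ \al_j^t<_w\al_i^0\}+\#\{(i,j): j<i,\ \al_j^0<_w\al_i^0\}$. Since $(i,j,s)\notin\trips$ forces $s\geq 1$ or ($s=0$ and $i>j$), every move crosses towards the alcove and lowers this number by exactly one. With this change the surjectivity half is correct. (In Step 1, $H_{i,j,s}\in\As$ is not equivalent to $(i,j,s)\in\trips$ when $s=0$ and $i<j$, but you only use the valid implication.)

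\textbf{Step 3.} This is the whole content of the transitive case, and it is only a heuristic. Your $w_R$ is not well defined: a poset has no canonical ``latest'' linear extension, and the shift axiom of sketches is not a poset condition. It also points the wrong way. Locally maximal sketches are those from which no move $\al_j^s\al_i^0\to\al_i^0\al_j^s$ stays in the region, so their letters $\al_i^0$ sit as \emph{early} as possible. In the example above, the unique locally maximal sketch is $\al_2^0\al_1^0\al_2^1\al_1^1$, whereas pushing the $\al^0$ letters late leads to $\al_1^0\al_1^1\al_2^0\al_2^1$. The chaining argument, which is where transitivity must enter, is never given.

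One way to supply it from your Steps 1--2 is via Newman's lemma:
\begin{enumerate}
\item The moves (crossing a wall of $\beta(w)$ not in $\As$, towards the alcove) form a terminating rewriting system.
\item Its normal forms are exactly $\Ls$, because every wall of $\beta(w)$ facing the alcove comes from the maximal factor $\al_j^s\al_i^0$ of some $j$.
\item Its equivalence classes are the fibers of $\beta_{\bS}$: join two Catalan subregions of a region by a generic segment, which crosses only hyperplanes not in $\As$, one at a time.
\item So it suffices to check local confluence. Two distinct moves from $w$ occur at distinct indices $j$. Either they involve disjoint letters and commute, or they overlap in consecutive triples $\al_j^{s+s'+d}\al_i^{s'+d}\al_k^d$ with $(i,j,s)\notin\trips$ and $(k,i,s')\notin\trips$.
\item In the overlapping case, both branches meet after additionally crossing $H_{k,j,s+s'}$. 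Transitivity says exactly that $(k,j,s+s')\notin\trips$, so this crossing stays inside the region.
\end{enumerate}
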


\section{Right inverse to the generalized Pak-Stanley labeling}\label{RightInverseSec}
In this section, we first define a map from $\bS$-locally maximal sketches to $\Ds$-parking functions that is consistent with the \GPS labeling of a region. We then define a map from $\Ds$-parking functions to $\bS$-locally maximal sketches and show that when composed with $\beta_{\bS}$, this is a right inverse to the \GPS labeling. 

Let $\bS = (S_{i,j})_{1\leq i < j \leq n}$ be a tuple of finite sets of integers, and let $m = \max\bigcup_{i,j\in[n]} S_{i,j}^+$. By definition, the \GPS labeling $\lambda(R)$ of a region of $R$ of $\mA_{\bS}$ is $p = (p_1, \ldots, p_n)$, where 
\begin{align*}
    p_i = |\{(j,s) \mid j\in [n], ~ s\in S_{i,j}^+  \textrm{ such that }s < x_i - x_j ~\}|.
\end{align*}

For a $(m,n)$-sketch $w$, we let $\Phi(w)$ be the $\Ds$-parking function $p = (p_1, \ldots, p_n)$, where 
\begin{align*}
    p_i = |\{(j,s) \mid \, j\in [n], \, s\in S_{i,j}^+\textrm{ such that }\al_{j}^{s} <_w \al_i^{0} ~\}|.
\end{align*}
It is clear from the definition of $\be_\bS$ that $\Phi = \lambda \circ \beta_{\bS}$.

We will now define a right inverse of $\Phi$. Precisely,  we will define a map $\Psi$ from the set of $\Ds$-parking functions to the set of $\bS$-local maximum $(m,n)$-sketches. 

Let $p = (p_1, \ldots, p_n)$ be a $\Ds$-parking function. We define the word $w=w_1\ldots w_\ell=\Psi(p)$ by determining its letters $w_r$ one at a time according to the following process which is illustrated in Figure \ref{fig:exp-Phi}:

We maintain a $n$-tuple $P_r\in\ZZ^n$ which we initialize at $P_1=(p_1,\ldots,p_n)$, and an ordered list $O_r$ of indices in $[n]$ which we initialize to be empty: $O_1=\emptyset$.
We determine the letter $w_r$ of $\Psi(p)$, starting from $r=1$, according to the following rule: 

\begin{adjustwidth}{2em}{0pt}
    \noindent \textbf{Case 1}: The tuple $P_{r}$ has a zero entry. 

    Then, the letter $w_r$ is $\al_k^{0}$, where $k$ is the rightmost zero entry of $P_r$.
     We obtain $P_{r+1}$ from $P_{r}$ by subtracting $1$ from coordinate $k$ and every coordinate $i$ with a positive entry such that $0 \in S_{i,k}^+$. We obtain $O_{r+1}$ from $O_{r}$ by appending $k$ at the end of the list $O_r$.

    \noindent \textbf{Case 2}: The tuple $P_{r}$ does not have a zero entry, but $O_r$ is non-empty. 

    Then, the letter $w_r$ is $\al_k^{s}$, where $k$ is the first element of $O_r$ and $-s$ is the $k^{th}$ entry of~$P_r$. We obtain $P_{r+1}$ from $P_{r}$ by subtracting $1$ from coordinate $k$ and every coordinate $i$ with a positive entry such that $s \in S_{i,k}^+$. We obtain the list $O_{r+1}$ from $O_{r}$ by removing the first element $k$ of the list, and if $s<m$ appending $k$ at the end of the list $O_r$.
\end{adjustwidth}

This process terminates when neither condition is satisfied, and returns the word $w_1\ldots w_\ell$ constructed thus far. See Figure \ref{fig:exp-Phi} for some examples.

\begin{figure}[h]
\begin{subfigure}{\textwidth}
\centering
\includegraphics[clip,width=0.9\textwidth]{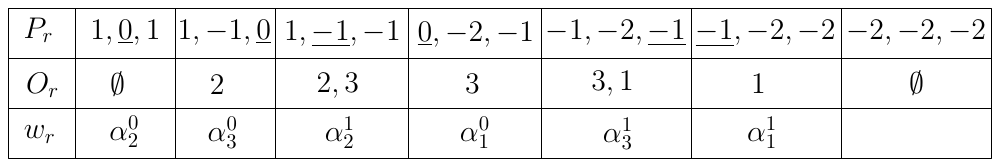}
\caption{$\Psi(1,0,1)$ for the $1$-Shi arrangement in dimension $3$.}
\end{subfigure}
\begin{subfigure}{\textwidth}
\centering
\includegraphics[clip,width=0.9\textwidth]{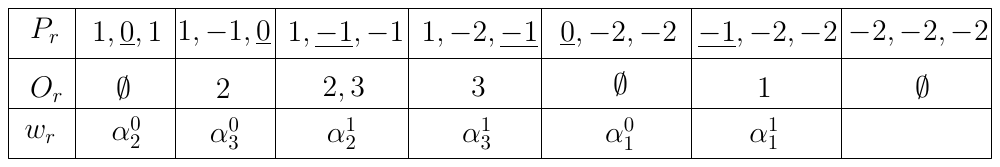}
\caption{$\Psi(1,0,1)$ for the $1$-Shi arrangement minus the hyperplane $\{x_1 - x_2 = 1\}$ (see Figure~\ref{fig:labelings}).}
\end{subfigure}
\caption{Computing $\Psi(p)$ for the parking function $p=(1,0,1)$ for two different $\bS$.}\label{fig:exp-Phi}
\end{figure}

\begin{lemma}\label{PsiWD}
For any $\Ds$-parking function $p$, the word $\Psi(p)$ is an $\bS$-locally maximal sketch.
\end{lemma}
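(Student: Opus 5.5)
The plan is to prove this by tracking invariants of the process defining $\Psi(p)$. There are three things to establish. First, the output contains each letter $\al_k^s$ ($k\in[n]$, $s\in[0;m]$) exactly once and satisfies the two order conditions of an $(m,n)$-sketch. Second, the process does not stop before every letter has been written. Third, the local maximality condition holds.

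\emph{Invariants.} I will show by induction on $r$ that the following hold.
\begin{itemize}[nolistsep]
\item An entry $P_r(k)$ is positive exactly when no letter $\al_k^\bullet$ has been written yet.
\item An entry never passes from positive to negative without passing through $0$, since in both cases only positive entries are decremented, apart from the coordinate $k$ being written.
\item After $\al_k^s$ is written, $P_{r}(k)=-(s+1)$.
\item The list $O_r$ contains each index $k$ whose last written letter $\al_k^s$ has $s<m$, exactly once. These indices are ordered according to the positions in $w$ of their last written letters.
\end{itemize}
Case 1 writes $\al_k^0$ and appends $k$ at the end, which preserves the ordering. Case 2 removes the first element, writes its next letter, and puts it at the end, which also preserves the ordering.

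From these invariants, each $\al_k^s$ is written at most once and $\al_k^{s-1}$ precedes $\al_k^s$. For the third sketch condition, suppose $\al_i^{s-1}<_w\al_j^{t-1}$. At the moment $\al_j^{t-1}$ is the last letter of $j$, the index $i$ either already has a later letter $\al_i^{s'}$ with $s'\ge s$, or it sits ahead of $j$ in the queue. In the latter case FIFO order forces $i$ to be popped (writing $\al_i^s$) before $j$ is popped (writing $\al_j^t$).

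\emph{Completeness.} This is the main step and uses the parking condition. Suppose the process stops at step $r$, so no entry of $P_r$ is zero and $O_r=\emptyset$. By the invariants, every index $j$ not in $U:=\{u: P_r(u)>0\}$ has had all letters $\al_j^0,\dots,\al_j^m$ written. If $U\neq\emptyset$, the $\Ds$-parking condition gives $u\in U$ with $p_u<|\{(u,v)\in A: v\notin U\}| = 1+|\{(j,s): j\notin U,\ s\in S_{u,j}^+\}|$. Since $u$ stayed positive throughout, every such letter $\al_j^s$ decremented coordinate $u$. Hence $P_r(u)\le p_u-|\{(j,s)\}|\le 0$, which is a contradiction. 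So $U=\emptyset$, every letter was written, and $w$ is an $(m,n)$-sketch. The care needed here is in checking that the "positive entry" restriction in the decrement rule never loses a decrement for an index of $U$.

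\emph{Local maximality.} I will in fact show more than required: whenever $\al_j^s$ is immediately followed by $\al_i^0$, one has $(i,j,s)\in\trips$. Consider the step writing $\al_j^s$. If $P(i)$ was positive before this step and zero after, then coordinate $i$ was decremented, so $s\in S_{i,j}^+$. Otherwise $P(i)$ was already $0$ before that step. Since Case 1 has priority, that step was Case 1 with $s=0$ and $j$ the rightmost zero entry. Because $i$ was also a zero entry, $i<j$, so $(i,j,0)\in\trips$.
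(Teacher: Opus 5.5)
Your proposal is correct and follows the paper's proof. The termination argument, which applies the $\Ds$-parking condition to the set $U$ of positive coordinates, is the same; your local-maximality argument proves the stronger $\Ms$-type property through the paper's case analysis, only reorganized around whether coordinate $i$ was decremented when $\al_j^s$ was written; and your queue invariants spell out what the paper calls clear from the first-in-first-out rule. One small correction: your first invariant should say that $P_r(k)<0$ exactly when some letter $\al_k^\bullet$ has been written, because unwritten coordinates can equal $0$ (every $\Ds$-parking function has a zero entry), and this corrected version is the one you actually use later.
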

\begin{proof} Let $p=(p_1,\ldots,p_n) \in \parkS$ and let $w=\Psi(p)$.

We first show that $w$ is an $(m,n)$-sketch. It is easy to see that when the procedure defining $\Psi$ terminates, the entries $(q_1,\ldots,q_n)$ of $P_r$ are all either positive or equal to $-m$ (for the indices that ever entered the list $O_r$). Let $U\subset[n]$ be the set of indices of the positive entries of $P_r$, and let $V=[n]\setminus U$.  Note that for $i \in U$, we have subtracted $1$ from position $i$ for every element of $S_{i,j}^+$ for every $j\in V$. 
Hence, $p_i = q_i + \sli_{j \in V} |S_{i,j}^+| \geq 1+ \sli_{j \in V} |S_{i,j}^+|.$ 
Moreover, in the digraph $D_\bS$, for $i\in U$, one has
$|\{(i,j) \mid j \notin U\}| = 1+ \sli_{j \in V} |S_{i,j}^+|.$
This gives $p_i \geq |\{(i,j) \mid j \notin U\}|$ for all $i \in U$. This contradicts $p\in \parkS$ unless $U=\emptyset$. We conclude that $U=\emptyset$ and $V=[n]$. This implies that all the letters $\al_i^s$ for $i\in[n]$ and $s\in[0;m]$ appear once in $w=\Psi(p)$. It is then clear from the definition that $w$ is an $(m,n)$-sketch (since the list $O_r$ is ``first in first out'')
    
It remains to show that $w$ is $\bS$-locally maximal.
Suppose the letter $w_r=\al_j^{s}$ is immediately followed by  $\al_i^{0}$ in $w$. We will show $(i,j,s) \in \trips$.

    If $s=0$ and $i<j$, then clearly $(i,j,s) \in \trips$. If $s = 0$ and $i > j$, as $w_r=\al_j^{0}$, the rightmost zero in $P_{r}$ has to be at coordinate $j$. In other words, for $k > j$, the $k^{th}$ coordinate of $P_{r}$ is non-zero. But, for the next letter to be $\al_i^{0}$, the $i^{th}$ coordinate of $P_{r+1}$ must be zero. As we obtain $P_{r+1}$ by subtracting $1$ from every coordinate $k$ such that $0 \in S_{k,j}^+$, we must have $0 \in S_{i,j}^+$, so $(i,j,s) \in \trips$. 
    
    If $s \neq 0$, then $P_{r}$ had no zeros. Now, if the next letter of $\Psi(p)$ is $\al_i^{0}$, we must have that $P_{r+1}$ has a zero at position $i$ and that this is in fact the rightmost zero in $P_{r+1}$. By definition of $\Psi$, $P_{r+1}$ is obtained from $P_{r}$ by subtracting $1$ from every coordinate $k$ such that $s \in S_{k,j}^+$. Hence $s \in S_{i,j}^+$, and $(i,j,s) \in \trips$. Thus, $w$ is $\bS$-locally maximal. 
\end{proof}

The main result of this section is the following:
\begin{theorem}\label{IdParking}
For every collection of sets $\bS=(S_{i,j})_{i<j}$, one has $\Phi \circ \Psi = \emph{Id}_{\parkS}$. 
\end{theorem}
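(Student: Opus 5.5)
The plan is to prove $\Phi(\Psi(p))=p$ coordinate by coordinate, by following the $i$-th entry of the tuple $P_r$ while $\Psi(p)$ is being built. Fix $p\in\parkS$ and let $w=\Psi(p)$. By Lemma~\ref{PsiWD}, $w$ is a complete $(m,n)$-sketch. So for every $i\in[n]$ the letter $\al_i^0$ occurs in $w$, say at position $r_i$, meaning $w_{r_i}=\al_i^0$. By the definition of $\Phi$, the $i$-th entry of $\Phi(w)$ is the number of letters $\al_j^s$ with $s\in S_{i,j}^+$ that occur at positions $r<r_i$. I aim to show that this number equals $p_i$.

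The first step is an invariant for the period before $\al_i^0$ is written. For every $r\le r_i$ I claim that
$$P_r(i)=p_i-\bigl|\{r'<r \mid w_{r'}=\al_j^s \text{ with } s\in S_{i,j}^+\}\bigr|.$$
At $r=r_i$ the rule of Case~1 forces $P_{r_i}(i)=0$, and the claim then gives exactly the desired count $p_i$. To prove the invariant, I use that coordinate $i$ only changes by $-1$ steps. While $P_r(i)>0$, the definition of $\Psi$ subtracts $1$ from coordinate $i$ precisely when the letter just written is some $\al_k^s$ with $s\in S_{i,k}^+$; the coordinate-$k$ rule never acts on $i$ here, since $k\ne i$ before time $r_i$. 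So the invariant holds as long as $P_r(i)$ stays positive. Because entries drop by one at a time, the first time coordinate $i$ becomes non-positive it is exactly $0$.

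The main obstacle is the gap between the moment $t$ at which $P_t(i)$ first reaches $0$ and the moment $r_i$ at which $\al_i^0$ is written. During this gap coordinate $i$ is no longer positive, so it is never decremented. If some letter $\al_k^s$ with $s\in S_{i,k}^+$ were written in the gap, it would be counted by $\Phi$ but not by $P$, and the count would exceed $p_i$. I will rule this out as follows.

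First, for every $r$ with $t\le r<r_i$, the tuple $P_r$ still has a zero entry, namely at coordinate $i$. Hence every step in the gap is a Case~1 step, and it writes $\al_k^0$ where $k$ is the rightmost zero of $P_r$. This forces $k>i$. For $i<k$ we have $S_{i,k}^+=\{s>0\mid H_{i,k,s}\in\As\}$, so $0\notin S_{i,k}^+$. Therefore none of the letters written in the gap is counted for coordinate $i$, and the invariant persists up to $r_i$. (If $p_i=0$ the gap starts at $r=1$ and the same argument applies.)

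Putting these together, the number of letters $\al_j^s$ with $s\in S_{i,j}^+$ before $\al_i^0$ is $p_i-P_{r_i}(i)=p_i$. This holds for every $i$, which gives $\Phi\circ\Psi=\mathrm{Id}_{\parkS}$. As a consequence, $\lambda\circ\beta_\bS\circ\Psi=\mathrm{Id}$, which recovers Mazin's surjectivity theorem.
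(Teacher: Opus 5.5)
Your proof is correct and follows the paper's route. Your invariant for coordinate $i$ of $P_r$ is exactly the paper's Lemma~\ref{keylemma}, evaluated at the step where $\al_i^0$ is written and $P_r(i)=0$. You also handle explicitly a point that the paper's proof of that lemma passes over: only positive entries are decremented. While the $i$-th entry sits at $0$, every step is a Case~1 step writing some $\al_k^0$ with $k>i$, and since $0\notin S_{i,k}^+$ none of these letters is counted for coordinate $i$.
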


We first need some definitions and a lemma.

\begin{definition}
    We define $\Ms$ to be the set of $(m,n)$-sketches such that if $\al_j^{s}$ is immediately followed by $\al_i^{0}$, then $(i,j,s) \in \trips$. 
    
    We define $\Ns=\text{Im}(\Psi)$, that is, the image by $\Psi$ of $\parkS$.
\end{definition}

\begin{remark}\label{rk:inclusion}
    It is clear that $\Ms \sse \Ls$. Moreover we have shown $\Ns \sse \Ms$ in the proof of Lemma~\ref{PsiWD}. 
\end{remark}

\begin{lemma}\label{keylemma}
    Let $p = (p_1, \ldots, p_n)\in \parkS$, and let $\al_k^{s}$ be the $r^{th}$ letter of $w = \Psi(p)$. Let $P_{r} = (q_1, \ldots, q_n)$. For all $i\in[n]$ such that $q_i \geq 0$, one has 
    $$
        p_i - q_i = |\{\al_j^{t} <_w \al_k^{s} \mid t \in S_{i,j}^+\}|.
    $$
\end{lemma}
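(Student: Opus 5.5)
The plan is to prove the identity by induction on $r$, keeping track of which steps of the procedure defining $\Psi$ decrement coordinate $i$. For $r=1$ both sides vanish, since $P_1=p$ and no letter precedes $w_1$. For the inductive step, it suffices to show the following: if the $i$-th entry of $P_{r+1}$ is non-negative, then coordinate $i$ is decremented at step $r$ (the step producing $w_r=\al_k^s$) exactly when $s\in S_{i,k}^+$. The inductive hypothesis at step $r$ is available, because entries never increase, so a non-negative entry at step $r+1$ was already non-negative at step $r$.

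First I will record two monotonicity facts about the procedure.
\begin{enumerate}
\item Entries of $P_r$ only decrease, and always by exactly $1$.
\item A coordinate with entry $\le 0$ is decremented only when its own letter is written.
\end{enumerate}
Next I will show that if $q_i\ge 0$ at step $r$, then no letter $\al_i^t$ occurs among $w_1,\dots,w_{r-1}$. When $\al_i^0$ is written, the entry of $i$ is $0$ and becomes $-1$. From then on it decreases only through its own letters $\al_i^t$, so it stays negative. Moreover, the letters $\al_i^t$ with $t>0$ can only be written after $\al_i^0$, because $i$ must first enter the list $O_r$. Consequently, before step $r$ every decrement of coordinate $i$ comes from a letter $\al_j^t$ with $j\neq i$. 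Such a letter decrements coordinate $i$ exactly when $t\in S_{i,j}^+$ and the entry of $i$ is positive at that moment.

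It then remains to rule out the one bad situation: a step $r'<r$ at which the entry of $i$ is $0$ and the letter written is some $\al_j^t$ with $t\in S_{i,j}^+$. Such a letter would be counted on the right-hand side without decrementing coordinate $i$. I expect this to be the main point, and I plan to handle it as follows. If the entry of $i$ is $0$ at step $r'$, then $P_{r'}$ has a zero entry, so we are in Case 1 and the letter is $\al_k^0$, where $k$ is the rightmost zero of $P_{r'}$. We have $k\neq i$, since otherwise $\al_i^0$ would be written before step $r$. Since $i$ is also a zero entry and $k$ is the rightmost one, $i<k$. But for $i<k$, the set $S_{i,k}^+$ consists only of positive values, so $0\notin S_{i,k}^+$, and the letter is not counted. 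Hence, at every earlier step, "counted" and "decremented" coincide, and summing over $r'<r$ gives $p_i-q_i=|\{\al_j^t<_w\al_k^s \mid t\in S_{i,j}^+\}|$.
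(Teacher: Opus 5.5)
Your proof is correct and follows the same letter-by-letter accounting as the paper: each of $w_1,\dots,w_{r-1}$ decrements coordinate $i$ exactly when it is counted on the right-hand side. You are more careful than the paper's proof, which passes over the ``positive entry'' condition in the definition of $\Psi$. Your two extra observations close that gap: no letter $\al_i^t$ can precede step $r$, and a zero entry at $i$ forces the letter to be $\al_k^0$ with $k>i$, where $0\notin S_{i,k}^+$ because $i<k$.
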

\begin{proof}
    By definition of $\Psi$, the tuple $P_{r}$ is determined by $p$ and the first $r-1$ letters $w_1,\ldots,w_{r-1}$ of $\Psi(p)$.  For $\ell \in [r-1]$, we consider how the letter $w_\ell$ affects the $i^{th}$ coordinate of $P_{r}$.

    Let  $w_{\ell} = \al_j^{t}$. By definition of $\Psi$, for such a letter, $1$ is subtracted from every coordinate~$k$ with a positive entry such that $t \in S_{k,j}^+$ that is, $1$ is subtracted from $p_i$ for every letter of the form  $\al_j^{t}$ with $t \in S_{i,j}^+$ in the first $r-1$ letters of $\Psi(p)$.  
    
    Hence, $p_i - q_i = |\{\al_j^{t} <_w \al_k^{s} \mid t \in S_{i,j}^+\}|.$
\end{proof}

We now prove Theorem~\ref{IdParking}. 
\begin{proof}[Proof of Theorem~\ref{IdParking}]
    Let $p = (p_1, \ldots, p_n) \in \parkS$. Let $w = \Psi(p)$ and $\Phi(w) = p' = (p_1', \ldots, p_n')$. By definition of $\Phi$, 
    \begin{align*}
        p_i' = |\{(j,s) \mid \al_{j}^{s} <_w \al_i^{0}, ~ j\in [n], ~ s\in S_{i,j}^+\}| = |\{\al_{j}^{s} <_w \al_i^{0} \mid s\in S_{i,j}^+\}| .
    \end{align*}   
    We wish to show that $p = p'$. Suppose $\al_i^{0}$ is the $r^{th}$ letter of $w$. Then, for $P_{r} = (q_1, \ldots, q_n)$, we have $q_i = 0$, and further, by Lemma~\ref{keylemma}, 
    \begin{align*}
        p_i = p_i - q_i = |\{\al_j^{t} <_w \al_i^{0} \mid t \in S_{i,j}^+\}| = p_i'.
    \end{align*}
    Hence $p = p'$, that is, $\Phi\circ \Psi = \text{Id}_{\parkS}$. 
\end{proof}

\begin{remark}
    As $\Phi = \lambda \circ \beta_{\bS}$, Theorem~\ref{IdParking} implies that $\lambda \circ \beta_{\bS} \circ \Psi = \text{Id}_{\parkS}.$  Hence the map $\beta_{\bS} \circ \Psi$ is a right inverse to the \GPS labeling.
    
Note that this implies Mazin's surjectivity result \cite{MR3721647} for $\bS$-braid arrangements, and hence for all deformations of the braid arrangement by Remark~\ref{rk:S-is-general}.
\end{remark}


\section{Bijectivity of the generalized Pak-Stanley labeling for $\me$-arrangements}\label{suffconds}
In this section, we establish certain sufficient conditions for the bijectivity of the \GPS labeling, and use them to prove the following theorem.

\begin{theorem}\label{mebij}
    The \GPS labeling $\lambda$ is bijective for $\me$-arrangements with inverse $\beta_{\bS} \circ \Psi$. 
\end{theorem}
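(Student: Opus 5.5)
We already know $\lambda\circ\beta_{\bS}\circ\Psi=\mathrm{Id}_{\parkS}$ by Theorem~\ref{IdParking}. So $\lambda$ is a bijection with inverse $\beta_{\bS}\circ\Psi$ as soon as $\beta_{\bS}\circ\Psi$ is surjective onto $R(\As)$. The plan is to prove this surjectivity in two steps. First, $\beta_{\bS}$ is a bijection $\Ls\to R(\As)$ for $\me$-arrangements. Second, $\Psi$ maps $\parkS$ onto $\Ls$, i.e.\ $\Ns=\Ls$. Then $\beta_{\bS}\circ\Psi$ is surjective; being injective as well (it has a left inverse), it is a bijection, and so is $\lambda$.

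\textbf{Step 1 (transitivity).} I would check directly that every $\me$-arrangement is transitive, so that Theorem~\ref{AtLeastOneL} gives bijectivity of $\beta_{\bS}$. In the $\me$ setting, $(i,j,s)\in\trips$ if and only if $s\le m_j-\eps_{i,j}$, with the convention that $s=0$, $i<j$ always belongs. Suppose $(i,j,s)\notin\trips$ and $(j,k,t)\notin\trips$. Then $s\ge m_j-\eps_{i,j}+1$, hence $s\ge 1$ since $\eps\le 1$ and $m_j\ge 0$, and $t\ge m_k-\eps_{j,k}+1$. I want $s+t> m_k-\eps_{i,k}$, with care for the boundary case $s+t=0$. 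When $i<j$ we have $\eps_{i,k}\le \eps_{j,k}$, so $s+t\ge 1+m_k-\eps_{j,k}+1> m_k-\eps_{i,k}$. When $i>j$, the same bound $s+t\ge m_k-\eps_{j,k}+2\ge m_k+1$ already suffices. A short case check of these inequalities, including the case $i=k$ if it is allowed, should close this step.

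\textbf{Step 2 (counting or injectivity of $\Phi$ on $\Ls$).} It remains to show $|\Ls|\le|\parkS|$, or equivalently that $\Phi$ is injective on $\Ls$. Given the inclusions $\Ns\sse\Ms\sse\Ls$ of Remark~\ref{rk:inclusion}, I would instead show directly that for $w\in\Ls$ one has $\Psi(\Phi(w))=w$. The argument is an induction on $r$: the construction of $\Psi$ applied to $p=\Phi(w)$ reproduces the letter $w_r$. By Lemma~\ref{keylemma}, the current tuple $P_r$ records, for each $i$, $p_i$ minus the number of earlier letters $\al_j^t$ with $t\in S_{i,j}^+$. For $p=\Phi(w)$, the coordinate $i$ is therefore zero exactly when all letters that would be counted in $p_i$ have already appeared. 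Case 2 reproduces the FIFO order forced by the sketch axioms. The real content is Case 1: I must show that if $w_r=\al_k^0$, then $P_r$ has a zero at $k$ and no zero to the right of $k$. Conversely, if $w_r=\al_k^s$ with $s>0$, then $P_r$ has no zero.

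\textbf{Main obstacle.} The hard part is showing that local maximality, combined with the interval structure $S^+_{i,j}=[\cdot\,;m_j-\eps_{i,j}]$ and the monotonicity $\eps_{i,k}\le\eps_{j,k}$, forces $P_r$ to have no premature zero. A coordinate $i$ with $\al_i^0$ appearing later but all of its counted letters already appeared would be such a premature zero. I would try the following. Let $\al_i^0$ be the first letter of a maximal run of initial letters, and look at the letter $\al_j^s$ immediately preceding it. Local maximality gives $(i,j,s)\in\trips$ for the maximal such $s$. Because $S^+$ is an interval starting at $0$ or $1$, the letter $\al_j^{s}$ is the last counted letter for coordinate $i$ among the copies $\al_j^\cdot$. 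Then I use the $\eps$-monotonicity to show that any index $i'>i$ that also became zero would contradict the rightmost-zero rule. This is where I expect the $\me$-hypothesis to be used in full and the bookkeeping to be delicate.
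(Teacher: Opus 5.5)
Your reduction is correct: $\beta_{\bS}:\Ls\to R(\As)$ is surjective for every $\bS$ (Theorem~\ref{AtLeastOneL}), so proving $\Psi(\Phi(w))=w$ for all $w\in\Ls$ gives $\Ns=\Ls$. That makes $\beta_{\bS}\circ\Psi$ surjective and $\lambda$ bijective. Step~1 is therefore unnecessary. As written it also contains an error: $(i,j,s)\notin\trips$ does not force $s\ge 1$ when $i>j$ (take $m_j=0$, $\eps_{i,j}=1$, $s=0$). Transitivity still holds, but for $i>j$ you must use $\eps_{j,k}\le\eps_{i,k}$ instead of $s\ge1$.

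The genuine gap is that Step~2 is the entire content of the theorem, and you stop exactly there with a sketch that does not work as stated. Local maximality only gives $(i,j,s)\in\trips$ for the \emph{largest} $s$ such that $\al_j^s$ is immediately followed by some $\al_i^0$. The letter preceding your run need not realize that maximum. The first missing ingredient is that, for an $\me$-arrangement, \emph{every} adjacency $\al_j^s\al_i^0$ in $w\in\Ls$ satisfies $(i,j,s)\in\trips$. Suppose not. Then $s\ge m_j$. But the maximal adjacency $\al_j^{s'}\al_{i'}^0$ has $s'>s\ge 0$, hence $s'\in S^+_{i',j}$ and so $s'\le m_j$, a contradiction. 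This is property~\eqref{Y}, i.e.\ $\Ls=\Ms$ (Lemma~\ref{Yneccsuff}).

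The second missing ingredient is ruling out premature zeros for an \emph{arbitrary} unplaced index $i$, not just the first letter of a run. Fix $i$ with $\al_i^0=w_b$, $b>r$. By your inductive hypothesis and the bookkeeping of Lemma~\ref{keylemma}, the $i$th entry of $P_r$ counts the letters $\al_j^t$ with $t\in S^+_{i,j}$ among $w_r,\dots,w_{b-1}$.

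Now walk backwards through $w_{b-1},w_{b-2},\dots$. Show by induction that, as long as no counted letter has been met, every letter is $\al_k^0$ with $k>i$. The inductive step uses the adjacency property above together with $\eps$-monotonicity in the form ``for $i<j$, $t\in S^+_{j,k}$ implies $t\in S^+_{i,k}$ or ($t=0$ and $i<k$)'', which is property~\eqref{X}. Consequently a zero at coordinate $i$ at step $r$ forces $w_r=\al_k^0$ with $k>i$. This is precisely your Case~1/Case~2 dichotomy together with the rightmost-zero rule.

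With these two ingredients your induction goes through, and it becomes a reorganization of the paper's proof. The paper isolates $\Ls=\Ms$ via~\eqref{Y}. It then proves $\Phi$ injective on $\Ms$ under~\eqref{X} by comparing two sketches at their first difference, using exactly this backward chain (Lemma~\ref{Xsuff}). Your version would instead establish $\Psi\circ\Phi=\mathrm{Id}$ on $\Ls$ directly, which is slightly more explicit.
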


The proof of Theorem \ref{mebij} starts with the following lemma.

\begin{lemma}\label{NLM}
    Let $\As$ be an $\bS$-braid arrangement. 
    If $\Ns = \Ms = \Ls$, then the \GPS labeling $\lambda$ is bijective. 
    Conversely, if  $\lambda$ and $\be_\bS:\Ls\to R(\As)$ are both bijective then $\Ns = \Ms = \Ls$.    
\end{lemma}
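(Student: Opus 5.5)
The plan is to work with the maps $\Psi:\parkS\to\Ns$, $\beta_{\bS}:\Ls\to R(\As)$ and $\lambda:R(\As)\to\parkS$, together with the identities $\Phi=\lambda\circ\beta_{\bS}$ and $\Phi\circ\Psi=\text{Id}_{\parkS}$ (Theorem~\ref{IdParking}). Since $\Phi\circ\Psi=\text{Id}$, the map $\Psi$ is injective, so $\Psi:\parkS\to\Ns$ is a bijection whose inverse is the restriction of $\Phi$ to $\Ns$. We also have the chain $\Ns\sse\Ms\sse\Ls$ from Remark~\ref{rk:inclusion}, and $\beta_{\bS}:\Ls\to R(\As)$ is surjective by Theorem~\ref{AtLeastOneL}.

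For the forward direction, assume $\Ns=\Ms=\Ls$. Surjectivity of $\lambda$ is already known from the right inverse $\beta_{\bS}\circ\Psi$. For injectivity I will use cardinalities. Since $\beta_{\bS}$ is surjective from $\Ls=\Ns$, we get $|R(\As)|\le|\Ls|=|\Ns|=|\parkS|$. Since $\lambda$ is surjective, $|R(\As)|\ge|\parkS|$. All these sets are finite, so equality holds and the surjection $\lambda$ is a bijection. Its inverse is then its right inverse $\beta_{\bS}\circ\Psi$. As a by-product, $\beta_{\bS}$ restricted to $\Ls$ is also bijective.

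For the converse, assume $\lambda$ and $\beta_{\bS}:\Ls\to R(\As)$ are bijective. Then $\Phi=\lambda\circ\beta_{\bS}$ is a bijection $\Ls\to\parkS$. Because $\Phi\circ\Psi=\text{Id}_{\parkS}$, the map $\Psi$ is the inverse of $\Phi$, hence a bijection onto $\Ls$, so $\Ns=\text{Im}(\Psi)=\Ls$. Combined with $\Ns\sse\Ms\sse\Ls$, this gives $\Ns=\Ms=\Ls$.

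There is no real obstacle here. The one point to check is that $\Phi$ is well defined on all of $\Ls$ with values in $\parkS$. This holds because $\Phi=\lambda\circ\beta_{\bS}$ and $\lambda$ lands in $\parkS$ by Mazin's theorem (equivalently, $\lambda$ is surjective onto $\parkS$ and takes values there by definition). The finiteness needed for the counting argument is immediate, since there are finitely many sketches.
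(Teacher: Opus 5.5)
Your proof is correct and uses the same ingredients as the paper's ($\Phi\circ\Psi=\text{Id}_{\parkS}$, $\Phi=\lambda\circ\beta_{\bS}$, surjectivity of $\beta_{\bS}$ on $\Ls$, and $\Ns\sse\Ms\sse\Ls$), and your converse is essentially the paper's. The only difference is in the forward direction: you compare cardinalities, whereas the paper notes that $\lambda\circ\beta_{\bS}\circ\Psi=\text{Id}_{\parkS}$ already makes $\beta_{\bS}$ injective on $\Ns$, so $\Ns=\Ls$ makes it a bijection onto $R(\As)$ and gives $\lambda=(\beta_{\bS}\circ\Psi)^{-1}$ directly, with no appeal to finiteness.
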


\newcommand{\bLS}{\be_\bS^L}
\newcommand{\bNS}{\be_\bS^N}

\begin{proof} Recall that $\text{Im}(\Psi)=\Ns \sse \Ms \sse \Ls$. Let $\bNS$ and $\bLS$ be the restriction of the map $\be_\bS$ to $\Ns$ and $\Ls$ respectively.

By Theorem \ref{IdParking},  $\lambda\circ \bNS\circ \Psi=\text{Id}_{\parkS}$. Hence $\bNS$ is injective, and  $\lambda$ is bijective if and only if $\bNS: \Ns\to R(\As)$ is surjective. By  Theorem~\ref{AtLeastOneL}, $\bLS:\Ls\to R(\As)$ is surjective.  Hence if $\Ns =\Ls$ then $\bNS$ is surjective and $\lambda$ is bijective. Conversely, if $\bLS$ is bijective, then  $\bNS$ is surjective if and only if $\Ns =\Ls$.
\end{proof}

We now want to prove that  $\Ns = \Ms = \Ls$ for every $\me$-arrangement $\As$. To this aim, we will establish some sufficient conditions for the identities $\Ns = \Ms$ and $\Ms = \Ls$ to hold.

\begin{definition}
    We say an $\bS$-braid arrangement $\As$ has \emph{Property Y}, or $(Y)$ holds, if
    \begin{equation}\tag{Y}\label{Y}
        \forall i,j,k \in [n], \, \forall s \geq 0 \text{ with } s>0 \text{ or } j<i, \text{ if } s \notin S_{i,j}^+, \text{ then } s+t \notin S_{k,j}^+ \text{ for all } t>0, k\neq i.
    \end{equation}
\end{definition}

\begin{lemma}\label{Yneccsuff}
    Let $\As$ be an $\bS$-braid arrangement. Then~\eqref{Y} holds if and only if $\Ms = \Ls$. 
\end{lemma}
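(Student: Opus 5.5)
The plan is to prove the two implications separately, working directly with the definitions of $\Ms$ and $\Ls$. Both sets impose a condition on adjacencies $\al_j^{s}\,\al_i^{0}$ in a sketch. The difference is that $\Ms$ requires $(i,j,s)\in\trips$ for every such adjacency, while $\Ls$ requires it only for the adjacency with maximal $s$, for each $j$. Throughout I take $i,j,k$ with $i\neq j$ and $k\neq j$; hyperplanes $H_{j,j,s}$ do not exist, so $S_{j,j}^+=\emptyset$, and I assume the intended reading of \eqref{Y} has $i\neq j$.

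\emph{\eqref{Y} implies $\Ms=\Ls$.} By Remark~\ref{rk:inclusion} it suffices to show $\Ls\sse\Ms$. Let $w\in\Ls$ and suppose, for contradiction, that $\al_j^{s}$ is immediately followed by $\al_i^{0}$ with $(i,j,s)\notin\trips$. Since $s=0,\ i<j$ always gives a triple, we have $s>0$ or $j<i$, and also $s\notin S_{i,j}^+$. Let $s'$ be maximal such that $\al_j^{s'}$ is immediately followed by some $\al_k^{0}$. Local maximality gives $(k,j,s')\in\trips$, so $s'\neq s$, and hence $s'>s$. Because the letter $\al_i^0$ occurs only once and it follows $\al_j^s$, we get $k\neq i$. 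Writing $s'=s+t$ with $t>0$, property \eqref{Y} gives $s'\notin S_{k,j}^+$. As $s'>0$, this means $(k,j,s')\notin\trips$, which is a contradiction.

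\emph{$\Ms=\Ls$ implies \eqref{Y}.} I argue by contraposition. Suppose \eqref{Y} fails for some $i,j,k,s,t$: so $s\notin S_{i,j}^+$ with $s>0$ or $j<i$, $t>0$, $k\neq i$, and $s+t\in S_{k,j}^+$; in particular $s+t\le m$. I will build an explicit point $x$ with all differences $x_a-x_b$ non-integral, so that $x$ lies in a region of the $m$-Catalan arrangement. Let $w$ be the sketch recording the order of the numbers $x_a+u$. The goal is to arrange $w\in\Ls\setminus\Ms$. Take small $0<\delta'<\delta\ll 1$ and set
\[
x_j=0,\qquad x_i=s+\delta,\qquad x_k=s+t+\delta'.
\]
With these choices, $\al_j^{s}$ is immediately followed by $\al_i^0$, and $\al_j^{s+t}$ is immediately followed by $\al_k^0$ because $\delta'<\delta$. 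For $u>s+t$, the letter $\al_j^u$ is followed by $\al_k^{u-s-t}$, whose superscript is nonzero. Hence the maximal adjacency for $j$ is the good triple $(k,j,s+t)$, while $(i,j,s)\notin\trips$ shows that $w\notin\Ms$.

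The main obstacle is placing the remaining indices $l$ without creating a new bad maximal adjacency. My plan is to put them in two tight clusters near half-integers. Indices $l>j$ go at $x_l\approx-\tfrac12$, and indices $l<j$ go at $x_l\approx\tfrac12$. Within each cluster the coordinates are ordered so that larger indices come first. Then every adjacency of two $\al^0$ letters inside a cluster has the form $\al_a^0\al_b^0$ with $b<a$, which is a triple. The top of the lower cluster is followed by $\al_j^0$, and this is a good triple because $l>j$. If $s=0$ (which forces $i>j$), the letter $\al_i^0$ at $\delta$ is followed by the bottom $\al_l^0$ of the upper cluster, where $l<j<i$; this is again a good triple. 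All other cluster letters $\al_l^u$ with $u\ge1$ are followed by letters of $j$, $i$ or $k$ with nonzero superscript. It remains to check that no letter of $i$ or $k$ is immediately followed by a $0$-superscript letter except in the cases above. This holds because $t\ge1$ and $\delta,\delta'$ are small. Carrying out this case check carefully, including the degenerate case $s=0$ and possibly empty clusters, is the step I expect to require the most care.
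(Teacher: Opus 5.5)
Your forward direction is the paper's argument, and you correctly add the facts $k\neq i$ and $i\neq j$, which the paper leaves implicit. For the converse you follow the paper's strategy: exhibit a sketch in $\Ls\setminus\Ms$ containing the bad adjacency $\al_j^s\al_i^0$, whose maximal adjacency for $j$ is the good $\al_j^{s+t}\al_k^0$. Building it from a generic point is a nice touch, since the sketch axioms then come for free.

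\textbf{The gap.} The placement of the remaining indices is flawed. With the clusters centred at $\pm\tfrac12$, two shifted copies collide:
\begin{itemize}
\item the upper cluster at level $u$ sits near $u+\tfrac12$;
\item the lower cluster at level $u+1$ also sits near $u+\tfrac12$.
\end{itemize}
You never fix their relative order. So your claim that cluster letters $\al_l^u$ with $u\ge 1$ are followed only by letters of $j,i,k$ with nonzero superscript is false. Some $\al_{l'}^1$ with $l'>j$ can be immediately followed by some $\al_l^0$ with $l<j$. Since $\al_{l'}^u$ for $u\ge 2$ is never followed by a superscript-$0$ letter, this is the maximal adjacency for $l'$. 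Membership in $\Ls$ would then need $1\in S_{l,l'}^+$, which nothing guarantees.

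Concretely, take $n=5$ and $\mA_\bS=\{x_5-x_2=1\}$, so $m=1$. Then \eqref{Y} fails for $(i,j,k,s,t)=(4,2,5,0,1)$. Choosing $x_1=\tfrac12+\epsilon$ and $x_3=-\tfrac12-\epsilon$ is allowed by your recipe, and it produces the sketch $\al_3^0\al_2^0\al_4^0\al_3^1\al_1^0\al_2^1\al_5^0\al_4^1\al_1^1\al_5^1$. This sketch is not in $\Ls$: because of $\al_3^1\al_1^0$, membership would require $1\in S_{1,3}^+=\emptyset$.

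\textbf{The fix.} Force the level-$0$ copy of the upper cluster to lie entirely below the level-$1$ copy of the lower cluster. For example, centre the clusters at $\pm\tfrac13$ and take $\delta'<\delta<\tfrac13$. Each unit interval $[u,u+1)$ then reads, in order:
\begin{itemize}
\item $\al_j^u$;
\item the letters of $k$ and $i$ (at offsets $\delta'$ and $\delta$);
\item the upper cluster at level $u$;
\item the lower cluster at level $u+1$.
\end{itemize}
Your case analysis then goes through. The only superscript-$0$ followers are:
\begin{itemize}
\item the within-cluster ones;
\item the top of the lower cluster followed by $\al_j^0$;
\item $\al_j^0\al_l^0$ (if $s>0$) or $\al_i^0\al_l^0$ (if $s=0$), with $l<j$;
\item the two designed adjacencies $\al_j^s\al_i^0$ and $\al_j^{s+t}\al_k^0$.
\end{itemize}
All of these except the designed bad one are good triples. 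This is essentially what the paper does. It puts every index other than $i$ and $k$, including $j$, into a single block of width less than $1$ in decreasing index order. It treats $s=0$ as a separate case, where $\al_i^0$ is inserted right after $\al_j^0$. That choice avoids exactly the cross-level interleaving your half-integer clusters create.
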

\begin{proof}
Suppose~\eqref{Y} holds. Assume for contradiction that there exists $w \in \Ls\setminus \Ms$. Then, there is $(i,k,s) \notin \trips$ such that $\al_k^s$ is followed immediately by $\al_i^{0}$ in $w$. Now, as $w \in \Ls$, we will have $\al_k^{s+t}$ followed immediately by $\al_j^{0}$ for some $t>0$, with $(j,k,s+t) \in \trips$. As $(i,k,s) \notin \trips$, we have $s \notin S_{i,k}^+$ with $s > 0$ or $k<i$. By~\eqref{Y}, $s+t \notin S_{j,k}^+$. Hence, $s+t = 0$ with $j<k$, which contradicts $t > 0$. Hence $\Ls = \Ms$.

 Suppose now $\Ms = \Ls$. Suppose~\eqref{Y} does not hold. Then two cases may arise.

\begin{adjustwidth}{2em}{0pt}
    \noindent \textbf{Case 1}: There exists $s > 0$ such that $s \notin S_{i,k}^+$ and $s+t \in S_{j,k}^+$ for some $i,j,k \in [n]$, $t>0$. 

    Then, consider the $(m,n)$-sketch
    $$w = \al_n^0 \ldots \al_k^0\ldots \al_1^0\ldots \al_k^{s}\al_i^0\ldots \al_k^{s+t}\al_j^0\ldots$$
    where the first $(n-2)$ letters are $\al_p^0$ with $p \neq i, j$ in descending order of $p$. Then, $w $ is in $\Ls\setminus \Ms$, which is a contradiction. 

    \noindent \textbf{Case 2}: There exists $i > k$ such that $0 \notin S_{i,k}^+$ and $t \in S_{j,k}^+$ for some $j \in [n]$, $t>0$. 

    Then, consider  the $(m,n)$-sketch
    $$w = \al_n^0\ldots \al_k^0\al_i^0 \ldots \al_1^0 \ldots \al_k^t\al_j^0\ldots$$
    where the letters before $\al_k^0$ are $\al_p^0$, $p > k$, $p \neq j$ in descending order of $p$ and the letters after $\al_i^0$ are $\al_p^0$, $p<k<i$, $p \neq j$ in descending order of $p$. Again, $w $ is in $\Ls\setminus \Ms$, which is a contradiction. 
\end{adjustwidth}

\end{proof}    

\begin{definition}
    We say a $\bS$-braid arrangement $\As$ has \emph{Property X}, or $(X)$ holds, if \begin{equation}\tag{X}\label{X}
        \forall 1\leq i< j \leq n, k \in [n], \, \forall s \in S_{j,k}^+ \text{ either } s \in S_{i,k}^+ \text{ or } (s = 0 \text{ and } i<k).
    \end{equation}
\end{definition}

\begin{lemma}\label{Xsuff}
    Let $\As$ be an $\bS$-braid arrangement. If~\eqref{X} holds, then $\Ns = \Ms$. 
\end{lemma}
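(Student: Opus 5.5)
Since $\Ns \sse \Ms$ is already known (Remark~\ref{rk:inclusion}), it remains to show $\Ms \sse \Ns$. Given $w\in\Ms$, set $p=\Phi(w)$; $p$ is a $\Ds$-parking function because $\Phi=\lambda\circ\be_\bS$ and $\lambda$ takes values in $\parkS$. I will prove $\Psi(p)=w$ by showing, by induction on $r$, that the $r$-th letter produced by the procedure defining $\Psi$ equals $w_r$. By Lemma~\ref{keylemma}, as long as the first $r-1$ letters agree with $w$, the state $P_r=(q_1,\dots,q_n)$ satisfies, for every $i$ whose letter $\al_i^0$ has not yet appeared,
$$q_i=p_i-|\{\al_j^t<_w w_r \mid t\in S_{i,j}^+\}| = |\{\al_j^t \mid w_r\leq_w \al_j^t<_w\al_i^0,\ t\in S_{i,j}^+\}|.$$
In particular $q_i\ge 0$, and $q_i=0$ exactly when no letter $\al_j^t$ with $t\in S_{i,j}^+$ lies in the interval $[w_r,\al_i^0)$ of $w$. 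The list $O_r$ is first-in-first-out, and the sketch axioms say the letters $\al_k^s$, $s\ge1$, appear in $w$ in the same cyclic order in which the $\al_k^{s-1}$ appeared. So whenever the next letter of $w$ is not of the form $\al_i^0$, it is forced to be $\al_k^{s}$ with $k$ the head of $O_r$ and $s$ the correct exponent. Thus the whole argument reduces to showing that Case~1 of $\Psi$ happens exactly when $w_r=\al_i^0$, and that it then picks this $i$.

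First suppose $w_r=\al_i^0$. Then $q_i=0$ by the formula above, so Case~1 applies. I must check that no index $i'>i$ whose letter $\al_{i'}^0$ appears later also has $q_{i'}=0$. For this I use (X): since $i<i'$, every $t\in S_{i',j}^+$ lies in $S_{i,j}^+$, except possibly $t=0$ with $i<j$. Consider the interval $[w_r,\al_{i'}^0)$. It contains $\al_i^0$ itself, and if $0\in S_{i',i}^+$ this already gives $q_{i'}>0$. I would like a more robust argument, however: look at the letter immediately preceding $\al_{i'}^0$ in $w$. It is some $\al_j^t$ with $(i',j,t)\in\trips$ because $w\in\Ms$, and it lies in the interval (or equals $w_r$). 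If $t\in S_{i',j}^+$ we get $q_{i'}\geq 1$. Otherwise $t=0$ and $i'<j$, and this letter is some $\al_j^0$ with $j>i'>i$. I then argue by a secondary induction on the positions of these zero-letters, repeatedly stepping back along a chain of $\al^0$ letters of increasing index. The chain must terminate, since indices increase and the interval starts at $\al_i^0$ with $i<i'$, and the terminal step produces a letter $\al_j^t$ with $t\in S_{\cdot,j}^+$ inside the interval. I then use (X) to transfer this contribution to $i'$.

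Conversely, suppose $w_r=\al_k^s$ with $s\ge1$. I need to show $q_i>0$ for every $i$ not yet placed. Apply the same reasoning to the letter immediately preceding $\al_i^0$ in $w$. Every element of the interval $[w_r,\al_i^0)$ that is not of the form $\al^0$ gives a witness, and chains of consecutive $\al^0$ letters are handled as above using (X) and $\Ms$. Finally, the termination condition of $\Psi$ matches the end of $w$, because $w$ contains every letter exactly once.

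The main obstacle is the chain argument in the second and third steps: descending through blocks of consecutive $\al^0$ letters (those with $t=0$ and $i'<j$) and propagating a witness back to the index in question via (X). I expect to need a careful statement of the form ``if $\al_a^0$ is immediately preceded by $\al_b^0$ with $a<b$, then $q_a\ge q_b$ contributions'', which follows from (X) because $S_{b,\cdot}^+\setminus\{0\}\subseteq S_{a,\cdot}^+$. I would first try to formulate this monotonicity cleanly, and then run the induction on the first position where $\Psi(p)$ and $w$ could differ.
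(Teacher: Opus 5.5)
Your route differs from the paper's. The paper never runs $\Psi$ on $\Phi(w)$. It proves $\Phi$ injective on $\Ms$ by taking the first position $a$ where two words $w\neq w'$ of $\Ms$ with $\Phi(w)=\Phi(w')$ differ. A separate Claim rules out two letters of positive exponent there, so one may assume $w_a=\al_i^0$ and $w'_a=\al_j^s$ with $s>0$ or $j<i$. The paper then combines injectivity with $\Phi\circ\Psi=\mathrm{Id}$.

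You instead check $\Psi(\Phi(w))=w$ letter by letter. Your reduction is sound: Lemma~\ref{keylemma} applies since unplaced coordinates never become negative, and your FIFO/sketch-axiom argument for letters of positive exponent plays the role of the paper's Claim. Everything then rests on one statement $(\ast)$: if $w\in\Ms$, $\al_{i'}^0$ occurs after position $r$, and $w_r$ either has positive exponent or equals $\al_i^0$ with $i<i'$, then some $\al_j^t$ with $w_r\le_w \al_j^t<_w\al_{i'}^0$ satisfies $t\in S^+_{i',j}$. Your Step A and Step B are the two cases of $(\ast)$. The paper's backward induction is exactly $(\ast)$ applied to $w'$ with $r=a$ and $i'=i$.

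\textbf{The gap.} Your sketch of $(\ast)$ would fail as written.
Walking back from $\al_{i'}^0$, the indices need not increase. A step where $\al_{j_2}^t$ precedes $\al_j^0$ (with $j>i'$) and $t\in S^+_{j,j_2}$ is not terminal in general: \eqref{X} only yields $t\in S^+_{i',j_2}$ \emph{or} ($t=0$ and $j_2>i'$), and in the second case you have no witness yet. Likewise the monotonicity ``$q_a\ge q_b$'' you plan to prove is false. In Step B, a letter $\al_j^t$ of positive exponent is a witness for $i$ only if $t\in S^+_{i,j}$, not always.

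\textbf{Counterexample.} Take $n=3$ and $\mA=\{x_2=x_3\}$. Then $S^+_{3,2}=\{0\}$, all other $S^+_{i,j}$ are empty, $m=0$, and \eqref{X} holds. The word $w=\al_2^0\al_3^0\al_1^0$ lies in $\Ms$ and $\Phi(w)=(0,0,1)$. So at $r=1$ one has $q_1=0<1=q_3$, although $\al_1^0$ is immediately preceded by $\al_3^0$. The chain from $\al_1^0$ visits the indices $3,2$, and the step $0\in S^+_{3,2}$ does not transfer to index $1$.

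\textbf{The fix} is the paper's invariant. Assume there is no witness in $[w_r,\al_{i'}^0)$, and prove by backward induction that every letter there is $\al_k^0$ with $k>i'$. Suppose $\al_{k_\ell}^{t_\ell}$ immediately precedes $\al_{k_{\ell-1}}^0$ with $k_{\ell-1}\ge i'$. Then $(k_{\ell-1},k_\ell,t_\ell)\in\trips$ gives one of two cases.
\begin{itemize}
\item $t_\ell=0$ and $k_\ell>k_{\ell-1}$.
\item $t_\ell\in S^+_{k_{\ell-1},k_\ell}$. Together with the no-witness assumption, this forces $t_\ell=0$ and $k_\ell>i'$: directly if $k_{\ell-1}=i'$, and via \eqref{X} otherwise.
\end{itemize}
Reaching $w_r$ then contradicts its form in both of your cases.

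With this repair your argument is complete. Compared with the paper, it uses $(\ast)$ at every step of a simulation and needs the bookkeeping on $O_r$. In exchange, it exhibits $\Psi$ directly as the inverse of $\Phi$ on $\Ms$ without invoking Theorem~\ref{IdParking}. The paper's first-difference argument uses $(\ast)$ only once and is shorter.
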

\begin{proof}
We suppose that~\eqref{X} holds. 
We know $\Ns\subseteq\Ms$ and want to prove $\Ns = \Ms$. By Theorem \ref{IdParking}, the map $\Phi:\Ns\to\parkS$ is surjective, thus it suffices to show that $\Phi:\Ms\to\parkS$ is injective.

Suppose for contradiction that there are distinct sketches $w=w_1\ldots w_{(m+1)n}$ and $w'=w_1'\ldots w_{(m+1)n}'$ both in $\Ms$ such that $\Phi(w)=\Phi(w')$. Let $a\in [(m+1)n]$ be the smallest index such that $w_a\neq w_a'$.\smallskip

\begin{adjustwidth}{2em}{0pt}
    \noindent \textbf{Claim:} We cannot have $w_a = \al_i^{s}$ and $w_a' = \al_j^{t}$ with $s\neq 0$ and $t \neq 0$. 

    Suppose $w_a = \al_i^{s}$ and $w_a' = \al_j^{t}$ with $s\neq 0$, $t \neq 0$. Since $s \neq 0$, and $w$ is an $(m,n)$-sketch, $\al_i^{s-1}$ will be one of the first $a-1$ letters of $w$. Similarly, since $t \neq 0$, $\al_j^{t-1}$ will be one of the first $a-1$ letters of $w'$. As $w_k = w_k'$ for all $k \in [a-1]$, $\al_i^{s-1}$ and $\al_j^{t-1}$ are two of the first $a-1$ letters of both $w$ and $w'$. 

    Now in $w$, we have $\al_i^{s}$ appears before $\al_j^{t}$, and hence $\al_i^{s-1}$ must appear before $\al_j^{t-1}$ as $w$ is an $(m,n)$-sketch. But, in $w'$, $\al_i^{s}$ appears after $\al_j^{t}$ and hence $\al_i^{s-1}$ must appear after $\al_j^{t-1}$ as $w'$ is an $(m,n)$-sketch. This contradicts the fact that $w_k = w_k'$ for all $k \in [a-1]$. Hence, our claim holds.\smallskip
\end{adjustwidth}

From this claim, we can assume without loss of generality (up to exchanging the role of $w$ and $w'$) that $w_a=\al_i^0$ and $w_a'=\al_j^s$ with either $s>0$ or ($s=0$ and $j<i$). 

By looking at the $i^{th}$ coordinate of the parking function $\Phi(w)=\Phi(w')$ we get
\begin{equation}\label{eq:pi-equal}
|\{\al_k^{t} <_{w} \al_i^{0} \mid k\neq i, ~t \in S_{i,k}^+\}|=|\{\al_k^{t} <_{w'} \al_i^{0} \mid k \neq i,~ t \in S_{i,k}^+\}|.
\end{equation}
Let $b>a$ be such that $w_{b}'=\al_i^0$, and let us denote $w_{b-\ell}'=\al_{k_\ell}^{t_\ell}$ for all $\ell\in[0;b-a]$. By~\eqref{eq:pi-equal} we get $t_\ell\notin S_{i,k_\ell}^+$ for all $\ell\in [b-a]$. 

Let us now prove that $t_\ell=0$ and $i\leq k_\ell$ for all $\ell\in[0;b-a]$, by induction on $\ell$. The base case $\ell=0$ holds since $w_b'=\al_i^0$. Suppose, by induction, that  $t_{\ell-1}=0$ and $i\leq k_{\ell-1}$ for some $\ell\in [b-a]$. Since $w'\in\Ms$ we have $(k_{\ell-1},k_{\ell},t_{\ell})\in\trips$. Thus either $t_\ell\in S_{k_{\ell-1},k_{\ell}}^+$ or ($t_\ell=0$ and $k_{\ell-1}<k_{\ell}$). In the second case we get $t_\ell=0$ and $i\leq k_\ell$, as wanted. In the case $t_\ell\in S_{k_{\ell-1},k_{\ell}}^+$, Property \eqref{X} implies that either $t_\ell\in S_{i,k_\ell}^+$   or ($t_\ell=0$ and $i\leq k_\ell$). Since we have established above that $t_\ell\notin S_{i,k_\ell}^+$, we get $t_\ell=0$ and $i\leq k_\ell$, as wanted.

For $\ell=b-a$, this give $w_a'=\al_j^s$ with $s=t_{b-a}=0$ and $j=k_{b-a}\geq i$, which gives a contradiction.   
\end{proof}

Lemmas~\ref{NLM},~\ref{Yneccsuff} and~\ref{Xsuff} immediately imply the following.
\begin{cor}\label{XYsuff}
If a $\bS$-braid arrangement satisfies~\eqref{X} and~\eqref{Y}, then the \GPS labeling $\lambda$ is bijective. 
\end{cor}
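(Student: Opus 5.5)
The corollary follows by chaining the three lemmas just proved. Assume $\As$ satisfies \eqref{X} and \eqref{Y}.

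First, apply Lemma~\ref{Xsuff}. Since \eqref{X} holds, it gives $\Ns = \Ms$.

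Second, apply Lemma~\ref{Yneccsuff}. This lemma is an equivalence, and we only need the direction from \eqref{Y} to the equality, which gives $\Ms = \Ls$.

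Combining the two steps, $\Ns = \Ms = \Ls$. This is exactly the hypothesis of the first part of Lemma~\ref{NLM}, so the \GPS labeling $\lambda$ is bijective. No argument about transitivity is needed. The surjectivity of $\be_\bS:\Ls\to R(\As)$ used in Lemma~\ref{NLM} comes from Theorem~\ref{AtLeastOneL}, which holds for every tuple $\bS$.

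We can also identify the inverse, since the next theorem needs it. By Theorem~\ref{IdParking}, $\lambda\circ\be_\bS\circ\Psi=\text{Id}_{\parkS}$. Once $\lambda$ is known to be bijective, this right inverse is automatically the two-sided inverse, so $\lambda^{-1}=\be_\bS\circ\Psi$.

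There is no real obstacle here; all of the substance sits in Lemmas~\ref{Xsuff} and~\ref{Yneccsuff}. The only point to check is that each lemma is used in the correct direction: the forward implication of Lemma~\ref{Yneccsuff}, and the first (sufficiency) statement of Lemma~\ref{NLM}, not its converse.
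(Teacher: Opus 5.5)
Your proposal is correct and is exactly the paper's argument: the corollary is stated as an immediate consequence of Lemmas~\ref{Xsuff}, \ref{Yneccsuff} and~\ref{NLM}, used in the same directions you use them. Your extra observation that $\be_\bS\circ\Psi$ is then the two-sided inverse, via Theorem~\ref{IdParking}, is also correct, and it is the form the paper needs for Theorem~\ref{mebij}.
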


\begin{figure}[h]
        \centering
        \includegraphics[width=\linewidth]{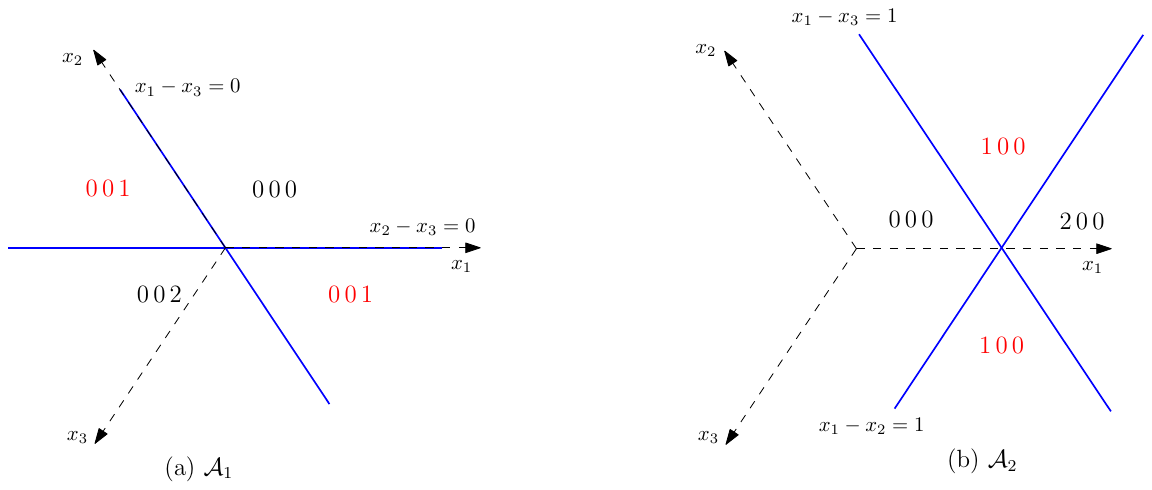}
        \caption{Two arrangements for which either $(X)$ or $(Y)$ does not hold and the \GPS labeling is not injective.}
        \label{fig:nonexample}
    \end{figure}
    
\begin{remark}
    For the arrangement $\mA_1 = \{x_1 - x_3 = 0, x_2 - x_3 = 0\}$, one can verify that~\eqref{Y} holds. However, as $0 \in S_{3,1}^+$ and $0 \notin S_{2,1}^+$,~\eqref{X} does not hold. As seen in Fig.~\ref{fig:nonexample}(a), the GPS labeling of $\mA_1$ is not injective.

    Similarly, for the arrangement $\mA_2 = \{x_1 - x_2 = 1, x_1 - x_3 = 1$\}, one can verify that~\eqref{X} holds. However, as $0 \notin S_{3,2}^+$ and $1 \in S_{1,2}^+$,~\eqref{Y} does not hold. As seen in Fig.~\ref{fig:nonexample}(b), the GPS labeling of $\mA_2$ is not injective.
\end{remark}

Finally, it is easy to check that $\me$-arrangements  satisfy~\eqref{X} and~\eqref{Y} which completes the proof of Theorem \ref{mebij}.


\section{Bijectivity of the generalized Pak-Stanley labeling for transitive arrangements}\label{TransitiveArrSec}

In this section, we focus on \emph{transitive} $\bS$-braid arrangements and prove the following result.
\begin{theorem}~\label{mebij-only}
    Let $\As$ be a transitive $\bS$-braid arrangement. Then the following are equivalent
    \begin{itemize}[noitemsep]
    \item[(a)] the \GPS labeling $\lambda$ is bijective 
    \item[(b)] $\Ns = \Ls = \Ms$
    \item[(c)] \eqref{X} and~\eqref{Y} hold 
    \item[(d)] $\As$ is a $\me$-arrangement
    \end{itemize}
\end{theorem}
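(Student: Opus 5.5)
The plan is to close the cycle of implications (a)$\Rightarrow$(b)$\Rightarrow$(c)$\Rightarrow$(d)$\Rightarrow$(a). Two of these steps come almost for free from earlier results.

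\textbf{(a)$\Rightarrow$(b).} Since $\As$ is transitive, Theorem~\ref{AtLeastOneL} shows that $\be_\bS:\Ls\to R(\As)$ is bijective. If $\lambda$ is also bijective, the converse part of Lemma~\ref{NLM} gives $\Ns=\Ms=\Ls$.

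\textbf{(d)$\Rightarrow$(a).} This is exactly Theorem~\ref{mebij}.

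\textbf{(b)$\Rightarrow$(c).} Lemma~\ref{Yneccsuff} applied to $\Ms=\Ls$ gives \eqref{Y}. For \eqref{X} I argue by contraposition, since Lemma~\ref{Xsuff} only gives sufficiency. Suppose \eqref{X} fails, so there are $i<j$, $k$ and $s\in S_{j,k}^+$ with $s\notin S_{i,k}^+$ and not ($s=0$ and $i<k$). That is, $(i,k,s)\notin\trips$ while $(j,k,s)\in\trips$. I will build two sketches in $\Ms$ with the same image under $\Phi$, modelled on the first example of Figure~\ref{fig:nonexample}. They differ by where $\al_i^0$ sits relative to the block following $\al_k^s$.
\begin{itemize}[noitemsep]
\item In $w$, the letter $\al_k^s$ is immediately followed by $\al_j^0$.
\item In $w'$, the letters $\al_i^0$ and $\al_j^0$ are swapped, with local maximality repaired elsewhere.
\end{itemize}
Since $(i,k,s)\notin\trips$, swapping does not change the count $p_i$. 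Because $\Phi$ is injective on $\Ns$ (it has the left inverse coming from Theorem~\ref{IdParking}), two elements of $\Ms$ with equal $\Phi$ would contradict $\Ns=\Ms$.

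I expect this step to be the main obstacle. The swap can change $p_j$ and the counts of other indices, and it can break local maximality. To control this I will use transitivity and \eqref{Y}, which is already established, to choose the sketches so that the only letters whose relative order changes are $\al_i^0$ and $\al_j^0$. The goal is to pick $w$ so that swapping them changes no count. If that fails, the fallback is a counting argument: show that $|\Ms|>|\parkS|$ by exhibiting one extra region.

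\textbf{(c)$\Rightarrow$(d).} This step is combinatorial. Fix the column index $k$ and study the sets $T_i:=S_{i,k}^+$ for $i\ne k$.
\begin{itemize}[noitemsep]
\item Property \eqref{Y} says that if some $s$ (with $s>0$, or $s=0$ and $k<i$) is missing from $T_i$, then every $T_{i'}$ with $i'\ne i$ contains no element larger than $s$.
\item Transitivity, applied with $(k,k,0)$ style relations or with chains through a third index, should force each $T_i$ to be an interval. I will try first the triple $(i,k,s)$, $(k,\cdot,t)$.
\end{itemize}
From these two facts, each $T_i$ is an initial interval, starting at $1$ for $i<k$ and at $0$ for $i>k$. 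Moreover, all their maxima lie within $1$ of a common value $m_k$; I take $m_k$ to be the largest maximum, with the appropriate adjustment when some set is empty.

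Property \eqref{X} then says that $T_j\subseteq T_i\cup\{0\}$ for $i<j$, where $0$ is allowed only when $i<k$. This yields $\eps_{i,k}\le\eps_{j,k}$, and with it the form required in Definition~\ref{mepsarr}. Some care is needed in the boundary case where $0$ lies in one set but not in another; I will treat it by comparing the conventions for $i<k$ and $i>k$ directly.
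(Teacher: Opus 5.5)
Your steps (a)$\Rightarrow$(b) and (d)$\Rightarrow$(a), and your derivation of \eqref{Y} from $\Ms=\Ls$, are correct and agree with the paper. The gap is the necessity of \eqref{X} in (b)$\Rightarrow$(c). This is the real content of the theorem (Lemma~\ref{Xnecc} in the paper). You give no construction there, and the one you describe cannot succeed.

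Let $i<j$, $k$, $s$ witness the failure of \eqref{X}, so $(i,k,s)\notin\trips$ and $s\in S_{j,k}^+$. Transitivity forces $0\in S_{j,i}^+$: otherwise $(j,i,0)\notin\trips$ and $(i,k,s)\notin\trips$ would give $(j,k,s)\notin\trips$. Now suppose $\al_i^0$ and $\al_j^0$ are the only pair of letters whose relative order differs between $w$ and $w'$.
\begin{itemize}
\item $p_i$ is unchanged, because $0\notin S_{i,j}^+$ for $i<j$.
\item $p_j$ changes by exactly one, because $0\in S_{j,i}^+$.
\end{itemize}
So $\Phi(w)\neq\Phi(w')$ however $w$ is chosen. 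Separately, moving $\al_i^0$ into the slot right after $\al_k^s$ puts $w'$ outside $\Ms$. Membership in $\Ms$ constrains every adjacency $\al_k^s\al_i^0$, not only the one with maximal $s$, so this cannot be ``repaired elsewhere''. Your counting fallback is circular: under (b) you already have $|\Ms|=|\Ns|=|\parkS|$, and showing $|\Ms|>|\parkS|$ is exactly the non-injectivity you are trying to prove.

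The missing idea is to use $0\in S_{j,i}^+$ as a compensating term rather than an obstruction.
\begin{itemize}
\item \emph{Case $s=0$.} Here $k<i<j$; take $i$ minimal. The paper puts $\al_k^0\al_j^0\al_i^0$ in $w$ and $\al_i^0\al_j^0\al_k^0$ in $w'$, inside an otherwise decreasing run of the $\al_\ell^0$. Then $p_j$ trades the contribution of $\al_k^0$ for that of $\al_i^0$. The counts $p_i$ and $p_k$ do not move, since $0\notin S_{i,k}^+$ and $k<i<j$. Minimality of $i$ is what makes the adjacency following $\al_k^0$ in $w'$ admissible.
\item \emph{Case $s>0$.} Move $\al_j^0$ from right after $\al_k^s$ to before $\al_k^s$, so $p_j$ loses $\al_k^s$. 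Insert $\al_i^0$ before it, so $p_j$ gains $\al_i^0$. Their positions in the run of $\al_\ell^0$'s are set by the smallest $\ell$ with $0\in S_{i,\ell}^+$, resp.\ $0\in S_{j,\ell}^+$, so that no other count changes. Property \eqref{Y} gives $(\ell,k,s-1)\in\trips$ for all $\ell\neq j$, which keeps both words in $\Ms$.
\end{itemize}

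In (c)$\Rightarrow$(d), the interval property is only asserted in your proposal. The paper derives it from \eqref{X} and \eqref{Y} alone:
\begin{itemize}
\item take the least $s$ missing from some $S_{i,k}^+$ (with $s>0$ or $i>k$);
\item \eqref{Y} then confines every other $S_{j,k}^+$ to $[0,s]$;
\item \eqref{X} excludes elements of $S_{i,k}^+$ above $s$.
\end{itemize}
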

For a graphical arrangement $\mA$ (a $\bS$-braid arrangement for which $S_{i,j} \in \{\emptyset, \{0\}\}$ for all $i,j \in [n]$), \eqref{Y} always holds, while \eqref{X} can be rewritten as
\begin{equation}\label{eq:Xgraphical}
\textrm{there are no indices } i<j<k\textrm{ in }[n]\textrm{ such that } H_{j,k,0}\notin \mA\textrm{ and } H_{i,k,0}\in \mA.
\end{equation}
Hence, Theorem \ref{mebij-only} implies the following.
\begin{cor}
The \GPS labeling $\lambda$ is bijective for a transitive graphical arrangement $\mA$ 
if and only if \eqref{eq:Xgraphical} holds.
\end{cor}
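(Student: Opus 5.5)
The statement to prove is the Corollary: for a transitive graphical arrangement, $\lambda$ is bijective if and only if \eqref{eq:Xgraphical} holds. Given Theorem~\ref{mebij-only}, it suffices to prove two facts about a graphical arrangement $\mA=\As$: that \eqref{Y} holds automatically, and that \eqref{X} is equivalent to \eqref{eq:Xgraphical}. Then (a)$\Leftrightarrow$(c) in Theorem~\ref{mebij-only} gives the result directly.

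First, the sets $S^+$ for a graphical arrangement. Since $S_{i,j}\in\{\emptyset,\{0\}\}$, we get $S_{i,j}^+=\emptyset$ for $i<j$, because only $s>0$ counts there. For $i>j$ we have $S_{i,j}^+\subseteq\{0\}$, and $0\in S_{i,j}^+$ exactly when $H_{i,j,0}=H_{j,i,0}\in\mA$, up to the sign convention. In particular every element of every $S^+_{k,j}$ equals $0$.

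For \eqref{Y}, the conclusion asks that $s+t\notin S_{k,j}^+$ with $t>0$. Since $s\ge 0$ and $t>0$, we have $s+t>0$, and such a value never lies in $S^+_{k,j}$. So \eqref{Y} holds vacuously.

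For \eqref{X}, take $i<j$, an index $k$, and $s\in S_{j,k}^+$. Then $s=0$, $j>k$, and $H_{j,k,0}\in\mA$. The requirement is that $0\in S_{i,k}^+$ or $i<k$.

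We next determine when this fails. The first alternative, $0\in S_{i,k}^+$, requires $i>k$. So a failure means $i>k$ (with $i\neq k$, and note $i=k$ would also fail via $i<k$ false, but $S_{k,k}$ is not defined, so we read $i\ne k$) and $0\notin S_{i,k}^+$, that is, $H_{i,k,0}\notin\mA$. Hence \eqref{X} fails exactly when there exist $k<i<j$ with $H_{j,k,0}\in\mA$ and $H_{i,k,0}\notin\mA$.

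Relabeling $k<i<j$ as $i<j<k$, this becomes the existence of $i<j<k$ with $H_{i,k,0}\in\mA$ and $H_{i,j,0}\notin\mA$. This should be matched against \eqref{eq:Xgraphical}, which is stated with the roles arranged as ``$H_{j,k,0}\notin\mA$ and $H_{i,k,0}\in\mA$''. The main obstacle is this index bookkeeping: I would carefully track the convention $S^+_{j,i}$ for $j>i$ with $s=0$, and the meaning of the hyperplane indexing, to verify that the two forms coincide under the paper's convention. If they only match after reversing the order of the coordinates, I would check that this relabeling is compatible with how the condition is displayed.
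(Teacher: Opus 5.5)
Your route is the paper's. The paper merely asserts that \eqref{Y} is automatic for graphical arrangements and that \eqref{X} ``can be rewritten as'' \eqref{eq:Xgraphical}, then applies (a)$\Leftrightarrow$(c) of Theorem~\ref{mebij-only}. Your check of \eqref{Y} is right. So is your computation of \eqref{X}, reading $i\neq k$: \eqref{X} fails exactly when there are $i<j<k$ with $H_{i,j,0}\notin\mA$ and $H_{i,k,0}\in\mA$.

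The gap is the step you leave open, and it cannot be closed the way you hope. Your condition is not equivalent to the displayed \eqref{eq:Xgraphical}, which has $H_{j,k,0}$ where you have $H_{i,j,0}$. The displayed condition is your condition after the relabeling $i\mapsto n+1-i$. That relabeling is not a symmetry of the problem: $R_0$, and the rule that $H_{i,j,0}$ is counted in $p_{\max(i,j)}$, both depend on the order of the coordinates. Transitivity, in the form \eqref{eq:transitive-graphical}, is reversal-invariant, but \eqref{X} is not.

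The paper's own example settles it. $\mA_1=\{H_{1,3,0},H_{2,3,0}\}$ is transitive and satisfies \eqref{eq:Xgraphical} as displayed, since the only triple has $H_{2,3,0}\in\mA_1$. Yet \eqref{X} fails for it, as the Remark notes, and its four regions receive the labels $(0,0,0),(0,0,1),(0,0,1),(0,0,2)$. Conversely, $\{H_{1,2,0},H_{1,3,0}\}$ is transitive and violates the displayed condition. But its labels $(0,0,0),(0,1,0),(0,0,1),(0,1,1)$ are distinct, so $\lambda$ is bijective. The displayed \eqref{eq:graphical-bij} misclassifies this second example in the same way, which suggests the indices were carried over from a reversed convention.

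So the corollary as literally displayed is false. What your argument correctly proves is this: for a transitive graphical arrangement, $\lambda$ is bijective if and only if there are no $i<j<k$ with $H_{i,j,0}\notin\mA$ and $H_{i,k,0}\in\mA$. You should state that conclusion and flag the index slip, rather than treat it as bookkeeping still to be reconciled.
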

This corollary could also be obtained from a result of Mazin and Miller~\cite{MMGraphical}, which states that $\lambda$ is bijective for a (not necessarily transitive) graphical arrangement if and only if 
\begin{equation}\label{eq:graphical-bij}
\textrm{there are no indices } i<j<k\textrm{ in }[n]\textrm{ such that } H_{i,j,0}\in\mA,~H_{j,k,0}\notin \mA\textrm{ and } H_{i,k,0}\in \mA.
\end{equation}
Indeed, for graphical arrangements, the transitivity property can be rewritten as:
\begin{equation}\label{eq:transitive-graphical}
\textrm{there are no indices } i<j<k\textrm{ in }[n]\textrm{ such that } H_{i,j,0}\notin\mA,~H_{j,k,0}\notin \mA\textrm{ and } H_{i,k,0}\in \mA,
\end{equation}
and it is clear that the arrangements satisfying \eqref{eq:graphical-bij} and \eqref{eq:transitive-graphical} are those satisfying \eqref{eq:Xgraphical}.

The rest of this section is dedicated to the proof of Theorem~\ref{mebij-only}. 
The equivalence between (a) and (b) is a direct consequence of  Theorem~\ref{AtLeastOneL} and Lemma~\ref{NLM}. Next we prove the  equivalence between (b) and (c).

\begin{lemma}\label{Xnecc}
    Let $\As$ be a transitive $\bS$-braid arrangement such that~\eqref{Y} holds and $\Ns = \Ms$. Then~\eqref{X} holds. 
\end{lemma}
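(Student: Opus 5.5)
We argue by contraposition. Suppose $\As$ is transitive and satisfies~\eqref{Y}, but~\eqref{X} fails. Then there are $i<j$, $k\in[n]$ and $s\in S_{j,k}^+$ such that $s\notin S_{i,k}^+$ and not ($s=0$ and $i<k$). Equivalently, $(j,k,s)\in\trips$ while $(i,k,s)\notin\trips$. If $s=0$, this forces $k<i<j$. The goal is to exhibit a sketch $w\in\Ms\setminus\Ns$.

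The tool for detecting $w\notin\Ns$ is the following observation. By Theorem~\ref{IdParking}, any $w\in\Ns$ satisfies $\Psi(\Phi(w))=w$. Hence it suffices to find $w\in\Ms$ with $\Psi(\Phi(w))\neq w$, or equivalently two distinct sketches of $\Ms$ with the same image under $\Phi$. The mechanism that should break is the rule in Case~1 of $\Psi$, which always outputs the \emph{rightmost} zero entry. The sketch we build should put $\al_i^0$ somewhere $\Psi$ is forced to output $\al_j^0$ (recall $j>i$), or else $\al_i^0$ cannot be placed there at all.

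\textbf{Construction.} Mimicking the sketches in the proof of Lemma~\ref{Yneccsuff}, first write the zero-level letters $\al_p^0$ in decreasing order of $p$. This keeps $\Ms$-validity automatic there, since $(p,q,0)\in\trips$ when $p<q$. Then insert the levels $1,\dots,m$ in the same order. Modify this so that $\al_k^s$ is immediately followed by $\al_j^0$, which is legal since $(j,k,s)\in\trips$. Then let $\al_i^0$ come right after $\al_j^0$, which is legal since $(i,j,0)\in\trips$ as $i<j$.

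Compare $w$ with the sketch $w'$ obtained by moving $\al_i^0$ to just before $\al_k^s$. On the $i$-th coordinate, $\Phi(w)_i=\Phi(w')_i$: the only letters $\al_i^0$ crosses are $\al_k^s$ and $\al_j^0$, and $s\notin S_{i,k}^+$ and $0\notin S_{i,j}^+$ (because $i<j$). The other coordinates of $\Phi$ can only change through $\al_j^0$ crossing $\al_i^0$, namely via whether $0\in S_{j,i}^+$.

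\textbf{Case split.} If $0\notin S_{j,i}^+$, we get $\Phi(w)=\Phi(w')$ with $w\neq w'$, and it remains to check $w'\in\Ms$. If $0\in S_{j,i}^+$, instead run $\Psi$ on $\Phi(w)$ directly. When the letter $\al_k^s$ is processed, both coordinates $i$ and $j$ are at $0$ or $1$. The condition $s\in S_{j,k}^+$, $s\notin S_{i,k}^+$ makes coordinate $j$ hit zero while $i$ does not. So $\Psi$ must output something other than the prefix of $w$, giving $\Psi(\Phi(w))\neq w$.

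\textbf{Main obstacle.} The hardest step is verifying that the auxiliary sketch ($w'$, or the reordered $w$) genuinely lies in $\Ms$. The letter preceding the moved $\al_i^0$, and the letter now following $\al_j^0$, must give triples in $\trips$. Here transitivity and~\eqref{Y} must be used. Transitivity lets us choose the predecessor $\al_l^t$ of $\al_i^0$ so that $(i,l,t)\in\trips$: if no valid predecessor existed, then chaining two non-triples would contradict transitivity. Property~\eqref{Y} guarantees that $\al_k^{s'}$ for larger $s'$ can still be followed by zero-level letters, which keeps the higher levels of the sketch locally valid. I would first try the choice $l=k$ with $t=s-1$ (or the last zero-level letter when $s=0$), and fall back on a transitivity argument if this predecessor fails.
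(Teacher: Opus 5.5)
Your strategy is sound and is in fact lighter than the paper's: exhibit one $w\in\Ms$ containing the factor $\al_k^s\al_j^0\al_i^0$ and show $\Psi(\Phi(w))\neq w$. The paper instead builds a second sketch $w'\in\Ms$ with $\Phi(w')=\Phi(w)$. That requires the minimality of $i$, the indices $i^*,j^*$, transitivity (to get $0\in S_{j,i}^+$), and (Y) (to make $\al_k^{s-1}\al_{n^*}^0$ legal).

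However, the decisive step is argued incorrectly. You say that when $\al_k^s$ is processed, coordinate $j$ reaches zero while $i$ does not. If that were the situation, $\Psi$ would next emit $\al_j^0$ and could then emit $\al_i^0$, i.e.\ reproduce $w$, so nothing is contradicted. The actual point is that $\al_k^s$ can never be emitted at that position. Suppose $\Psi(\Phi(w))=w$ and $w_r=\al_k^s$. Coordinate $i$ is decremented exactly at the letters counted in $p_i$ while it is positive, so by Lemma~\ref{keylemma}, $q_i=p_i-|\{\al_a^t<_w\al_k^s \mid t\in S_{i,a}^+\}|=0$. This holds because neither $\al_k^s$ (as $s\notin S_{i,k}^+$) nor $\al_j^0$ (as $i<j$) is counted in $p_i$. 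So $P_r$ already has a zero entry, and Case~1 forces $w_r=\al_a^0$ with $a\geq i$. That is impossible, since either $s>0$, or $s=0$ and $k<i$.

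This argument never uses $0\in S_{j,i}^+$, so your case split is unnecessary. Its first branch is in fact empty: transitivity applied to $(j,i,0)\notin\trips$ and $(i,k,s)\notin\trips$ would give $(j,k,s)\notin\trips$. Moreover, the $w'$ you describe there is not even a sketch unless higher-level letters are moved as well.

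The second gap is the membership $w\in\Ms$, which you leave as the main obstacle with only a tentative plan. Your reading of (Y) is also reversed. (Y) says that $s\in S_{j,k}^+$ forces $(\ell,k,s')\in\trips$ for all $s'<s$ and $\ell\neq j$; it says nothing about larger $s'$.

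In fact no hypothesis is needed for membership. Take a generic point whose coordinates $x_\ell$, $\ell\notin\{i,j\}$, lie in an interval of length less than $1$ and increase as $\ell$ decreases. Choose $x_k+s<x_j<x_i$ with no value $x_\ell+t$ ($\ell\notin\{i,j\}$) in between. In the resulting sketch, every zero-level letter other than $\al_j^0$ is either first or immediately preceded by a zero-level letter of larger index. This covers $\al_i^0$, and, when $s=0$, the letter $\al_\ell^0$ with $\ell<k$ that follows $\al_i^0$. Meanwhile $\al_j^0$ is preceded by $\al_k^s$ with $(j,k,s)\in\trips$.

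With these two repairs your argument proves the lemma. It even shows that $\Ns=\Ms$ implies (X) without using transitivity or (Y) at all.
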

\newcommand{\un}[1]{\underline{#1}}
\begin{proof}
    Suppose for contradiction that~\eqref{X} does not hold. 

    Suppose first that there exist $k < i < j$ such that $0 \in S_{j,k}^+\setminus S_{i,k}^+$. We take the minimal such $i$ given $k$ and $j$. The transitivity of $\As$ implies $0 \in S_{j,i}^+$. 
    Now, consider the sketches
    $$w = \al_n^0 \ldots \al_{i+1}^0 \un{\al_k^0} \al_j^0 \un{\al_i^0} \al_{i-1}^0\ldots \al_1^0\ldots~ \text{ and } w' = \al_n^0 \ldots \al_{i+1}^0 \un{\al_i^0} \al_j^0 \un{\al_k^0}\al_{i-1}^0 \ldots \al_1^0\ldots,$$
    where the letters from $\al_n^0$ to $\al_1^0$ (except $\al_i^0, \al_j^0,$ and $\al_k^0$) are in decreasing order of subscript.
    We have underlined the letters which have moved between $w$ and $w'$ (the omitted suffix in $w$ and $w'$ corresponds to repeating the indicated prefix but with superscripts $1,2,\ldots,m$, so as to get $(m,n)$-sketches).
    It is easy to see that $w, w' \in \Ms$ (using the minimality assumption on $i$),    and $p_{\ell} = p_{\ell}'$ for all $\ell \in [n]$. So $\Phi(w) = \Phi(w')$. Since $\Phi$ is injective on $\Ns=\Ms$, we reach a contradiction. 
    
    So there must exist $s> 0$ such that $s \in S_{j,k}^+\setminus S_{i,k}^+$ for some $i<j$ and $k$. Note that $0 \in S_{j,i}^+$ as if not, by transitivity, as $s \notin S_{i,k}^+$ we have $s \notin S_{j,k}^+$, which would be a contradiction.
    Let $i^* = \min\{\ell \in [n] \mid 0 \in S_{i, \ell}^+ \}\cup\{i+1\}$ and $j^* = \min\{\ell \in [i] \mid 0 \in S_{j,\ell}^+\}$. By \eqref{Y}, we have $s-1 \in S_{\ell,k}^+$ for all $\ell \neq j$, otherwise we would have $s \notin S_{j,k}^+$. If $i^*>j^*$, we  consider the sketches
    $$w = \al_k^0 \ldots \al_k^{s-1}\al_{n^*}^0\ldots \al_1^0 \al_k^s \un{\al_j^0} \un{\al_i^0}\ldots ~\text{ and } w' = \al_k^0\ldots \al_k^{s-1}\al_{n^*}^0\ldots\al_{i^*}^0\un{\al_i^{0}} \ldots \al_{j^*}^0\un{\al_j^0}\ldots \al_1^0 \al_k^s\ldots,$$ 
    where $n^* = \max{[n]\setminus\{i,j,k\}}$, 
    and $\al_{n^*}^0 \ldots \al_1^0$ represent all the letters $\al_{\ell}^0$ with $\ell \neq i,j,k$ in decreasing order of $\ell$. We have again underlined the letters which move between $w$ and $w'$ and omitted the suffixes (which are uniquely determined by the indicated prefixes for $(m,n)$-sketches). If $i^*\leq j^*$, then we instead consider the sketch 
    $$w'= \al_k^0\ldots \al_k^{s-1}\al_{n^*}^0\ldots\al_{i^*}^0\un{\al_i^{0}} \un{\al_j^0}\ldots \al_1^0 \al_k^s\ldots.$$ 
    It is easy to see that (in either cases) the sketches $w, w'$ are in $\Ms$. 
    Let $\Phi(w) = (p_1,\ldots, p_n)$ and $\Phi(w') = (p_1', \ldots, p_n')$. Clearly, $p_{\ell} = p_{\ell}'$ for all $\ell \neq i,j$. Further, by choice of $i^*$, as $s \notin S_{i,k}^+$ and $j>i$, $p_i = p_i'$. Finally, by choice of $j^*$, as $s \in S_{j,k}^+$ and $0 \in S_{j,i}^+$, $p_j = p_j'$. Hence $\Phi(w) = \Phi(w')$ for $w \neq w'$. Since $\Phi$ is injective on $\Ns=\Ms$, we again reach a contradiction.
   Hence~\eqref{X} holds. 
\end{proof}
The equivalence of (b) and (c) follows directly from Lemmas \ref{Yneccsuff}, \ref{Xnecc} and~\ref{Xsuff}. It only remains to prove the equivalence of (c) and~(d).

First, it is easy to check~\eqref{X} and~\eqref{Y} hold for $\me$-arrangements hence (d) implies~(c). 
We now suppose that~\eqref{X} and~\eqref{Y} hold for an arrangement $\As$. We want to show that $\As$ is a $\me$-arrangement. Fix $k \in [n]$. Let $s$ be the least non-negative integer such that $s \notin S_{i,k}^+$ with $s > 0$ or $i>k$.  Then, as~\eqref{Y} holds, for all $j\neq i,k$, one has $s+t \notin S_{j,k}^+$ for all $t> 0$. Hence, $[1, s - 1] \sse S_{j,k}^+ \sse [0, s]$ for all $j\neq i,k$.

    Next, suppose $s+t \in S_{i,k}^+$ for some $t>0$. Then, $s \in S_{j,k}^+$ for all $j \neq i$, as otherwise, as~\eqref{Y} holds, $s+t \notin S_{i,k}^+$. Further, as~\eqref{X} holds, $s+t \in S_{j,k}^+$ for all $j < i$, a contradiction unless $i = 1$. But $s \in S_{j,k}^+$ and $s \notin S_{1,k}^+$ contradicts~\eqref{X}. Hence, $[1, s-1] \sse S_{i,k}^+ \sse [0, s-1]$. 

    Finally, suppose $s \in S_{j,k}^+$ for some $j \neq i$. Then we define $m_k = s$ and for all $\ell\neq k$ we define  $\eps_{\ell,k}=1$ if $s \notin S_{\ell, k}$ (and either $s>0$ or $k<\ell$) and  $\eps_{\ell,k}=0$ otherwise. As~\eqref{X} holds, for $p > \ell$, if $s \in S_{p,k}^+$, $s \in S_{\ell, k}^+$, hence we must have $\eps_{p,k} = 1$ for all $p > \ell$. If $s \notin S_{j,k}^+$ for all $j \in [n]$, we define $m_{k} = s-1$ and $\eps_{\ell,k}=0$ for all $\ell\neq k$. Hence $\As$ is an $\me$-arrangement. 
    This completes the proof of Theorem \ref{mebij-only}.

    \begin{remark} 
        We have already established that for a non-negative integer $m$, the $m$-Shi and $m$-Catalan arrangements are examples of $\me$-arrangements. 
        Further, one can interpolate between the $(m-1)$-Catalan arrangement and the $m$-Catalan arrangement using $\me$-arrangements: there exists a sequence of  $\me$-arrangements $\mA_0,\mA_1,\ldots,\mA_k$, where $\mA_0$ is the $(m-1)$-Catalan arrangement, $\mA_k$ is the $m$-Catalan arrangement, and $\mA_i$ is obtained from $\mA_{i-1}$ by adding a single hyperplane for all $i\in[k]$. This sequence can also be made to include the $m$-Shi arrangement.
    \end{remark}
    
\begin{figure}[h]
    \centering
    \includegraphics[width=0.96\linewidth]{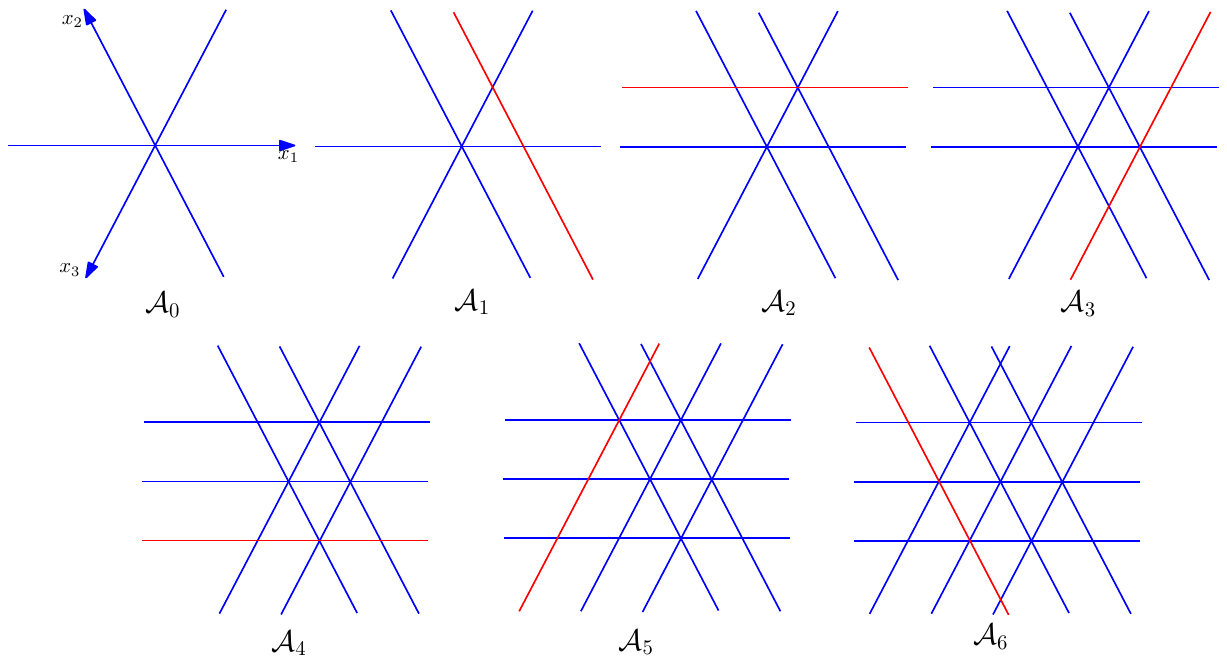}
    \caption{An interpolation from the braid arrangement to the $1$-Catalan arrangement. The hyperplane added at each step is shown in red.}
    \label{fig:interpolation}
\end{figure}

\begin{example}
    Figure~\ref{fig:interpolation} demonstrates an interpolation between the braid arrangement and the $1$-Catalan arrangement using $\me$-arrangements. The hyperplanes added, in order, are $H_{1,3,1}$, $H_{2,3,1}$, $H_{1,2,1}$, $H_{3,2,1}$, $H_{2,1,1}$ and $H_{3,1,1}$. In this example, $\mA_3$ is the $1$-Shi arrangement.
\end{example}

\noindent \textbf{Acknowledgment:}  We thank the FPSAC referees for their useful suggestions.

\printbibliography

\end{document}